\numberwithin{equation}{section}
\numberwithin{figure}{section}
\theoremstyle{plain}
\newtheorem{thm}{\protect\theoremname}[section]
\theoremstyle{definition}
\newtheorem{defn}[thm]{\protect\definitionname}
\theoremstyle{plain}
\newtheorem{fact}[thm]{\protect\factname}
\newenvironment{proof}[1][\protect\proofname]{\par
\normalfont\topsep6\p@\@plus6\p@\relax
\trivlist
\itemindent\parindent
\item[\hskip\labelsep\scshape #1]\ignorespaces
}{%
\endtrivlist\@endpefalse
}
\providecommand{\proofname}{Proof}
\theoremstyle{remark}
\newtheorem{rem}[thm]{\protect\remarkname}
 \theoremstyle{remark}
 \newtheorem{notation}[thm]{\protect\notationname}
\theoremstyle{plain}
\newtheorem{prop}[thm]{\protect\propositionname}
\theoremstyle{remark}
\newtheorem{claim}[thm]{\protect\claimname}
 \theoremstyle{lemma}
 \newtheorem{mainlemma}[thm]{\protect\mainlemmaname}
 \newlist{casenv}{enumerate}{4}
 \setlist[casenv]{leftmargin=*,align=left,widest={iiii}}
 \setlist[casenv,1]{label={{\itshape\ \casename} \arabic*.},ref=\arabic*}
 \setlist[casenv,2]{label={{\itshape\ \casename} \roman*.},ref=\roman*}
 \setlist[casenv,3]{label={{\itshape\ \casename\ \alph*.}},ref=\alph*}
 \setlist[casenv,4]{label={{\itshape\ \casename} \arabic*.},ref=\arabic*}
\theoremstyle{plain}
\newtheorem{cor}[thm]{\protect\corollaryname}
\theoremstyle{definition}
\newtheorem{example}[thm]{\protect\examplename}
\theoremstyle{plain}
\newtheorem{lem}[thm]{\protect\lemmaname}
\newcommand{\generic}{generic}
 \providecommand{\casename}{Case}
 \providecommand{\mainlemmaname}{Main Lemma}
 \providecommand{\notationname}{Notation}
\providecommand{\claimname}{Claim}
\providecommand{\corollaryname}{Corollary}
\providecommand{\definitionname}{Definition}
\providecommand{\examplename}{Example}
\providecommand{\factname}{Fact}
\providecommand{\lemmaname}{Lemma}
\providecommand{\propositionname}{Proposition}
\providecommand{\remarkname}{Remark}
\providecommand{\theoremname}{Theorem}
\begin{document}
\global\long\def\p{\mathbf{p}}
\global\long\def\q{\mathbf{q}}
\global\long\def\C{\mathfrak{C}}
\global\long\def\SS{\mathcal{P}}
 \global\long\def\pr{\operatorname{pr}}
\global\long\def\image{\operatorname{im}}
\global\long\def\otp{\operatorname{otp}}
\global\long\def\dec{\operatorname{dec}}
\global\long\def\suc{\operatorname{suc}}
\global\long\def\pre{\operatorname{pre}}
\global\long\def\qe{\operatorname{qf}}
 \global\long\def\ind{\operatorname{ind}}
\global\long\def\Nind{\operatorname{Nind}}
\global\long\def\lev{\operatorname{lev}}
\global\long\def\Suc{\operatorname{Suc}}
\global\long\def\HNind{\operatorname{HNind}}
\global\long\def\minb{{\lim}}
\global\long\def\concat{\frown}
\global\long\def\cl{\operatorname{cl}}
\global\long\def\tp{\operatorname{tp}}
\global\long\def\id{\operatorname{id}}
\global\long\def\cons{\left(\star\right)}
\global\long\def\qf{\operatorname{qf}}
\global\long\def\ai{\operatorname{ai}}
\global\long\def\dtp{\operatorname{dtp}}
\global\long\def\acl{\operatorname{acl}}
\global\long\def\nb{\operatorname{nb}}
\global\long\def\limb{{\lim}}
\global\long\def\leftexp#1#2{{\vphantom{#2}}^{#1}{#2}}
\global\long\def\intr{\operatorname{interval}}
\global\long\def\atom{\emph{at}}
\global\long\def\I{\mathfrak{I}}
\global\long\def\uf{\operatorname{uf}}
\global\long\def\ded{\operatorname{ded}}
\global\long\def\Ded{\operatorname{Ded}}
\global\long\def\Df{\operatorname{Df}}
\global\long\def\Th{\operatorname{Th}}
\global\long\def\eq{\operatorname{eq}}
\global\long\def\Aut{\operatorname{Aut}}
\global\long\def\ac{ac}
\global\long\def\DfOne{\operatorname{df}_{\operatorname{iso}}}
\global\long\def\modp#1{\pmod#1}
\global\long\def\sequence#1#2{\left\langle #1\,\middle|\,#2\right\rangle }
\global\long\def\set#1#2{\left\{  #1\,\middle|\,#2\right\}  }
\global\long\def\Diag{\operatorname{Diag}}
\global\long\def\Nn{\mathbb{N}}
\global\long\def\mathrela#1{\mathrel{#1}}
\global\long\def\twiddle{\mathord{\sim}}
\global\long\def\mathordi#1{\mathord{#1}}
\global\long\def\Qq{\mathbb{Q}}
\global\long\def\dense{\operatorname{dense}}
 \global\long\def\cof{\operatorname{cof}}
\global\long\def\tr{\operatorname{tr}}
\global\long\def\treeexp#1#2{#1^{\left\langle #2\right\rangle _{\tr}}}
\global\long\def\x{\times}
\global\long\def\forces{\Vdash}
\global\long\def\Vv{\mathbb{V}}
\global\long\def\Uu{\mathbb{U}}
\global\long\def\tauname{\dot{\tau}}
\global\long\def\ScottPsi{\Psi}
\global\long\def\cont{2^{\aleph_{0}}}
\global\long\def\MA#1{{MA}_{#1}}
\global\long\def\rank#1#2{R_{#1}\left(#2\right)}
\global\long\def\cal#1{\mathcal{#1}}

\def\Ind#1#2{#1\setbox0=\hbox{$#1x$}\kern\wd0\hbox to 0pt{\hss$#1\mid$\hss} \lower.9\ht0\hbox to 0pt{\hss$#1\smile$\hss}\kern\wd0} 
\def\Notind#1#2{#1\setbox0=\hbox{$#1x$}\kern\wd0\hbox to 0pt{\mathchardef \nn="3236\hss$#1\nn$\kern1.4\wd0\hss}\hbox to 0pt{\hss$#1\mid$\hss}\lower.9\ht0 \hbox to 0pt{\hss$#1\smile$\hss}\kern\wd0} 
\def\nind{\mathop{\mathpalette\Notind{}}}

\global\long\def\ind{\mathop{\mathpalette\Ind{}}}
\global\long\def\opp{\operatorname{opp}}
 \global\long\def\nind{\mathop{\mathpalette\Notind{}}}
\global\long\def\average#1#2#3{Av_{#3}\left(#1/#2\right)}
\global\long\def\Ff{\mathfrak{F}}
\global\long\def\mx#1{Mx_{#1}}
\global\long\def\maps{\mathfrak{L}}

\global\long\def\Esat{E_{\mbox{sat}}}
\global\long\def\Ebnf{E_{\mbox{rep}}}
\global\long\def\Ecom{E_{\mbox{com}}}
\global\long\def\BtypesA{S_{\Bb}^{x}\left(A\right)}

\global\long\def\init{\trianglelefteq}
\global\long\def\fini{\trianglerighteq}
\global\long\def\Bb{\cal B}
\global\long\def\Lim{\operatorname{Lim}}
\global\long\def\Succ{\operatorname{Succ}}

\global\long\def\SquareClass{\cal M}
\global\long\def\leqstar{\leq_{*}}
\global\long\def\average#1#2#3{Av_{#3}\left(#1/#2\right)}
\global\long\def\cut#1{\mathfrak{#1}}

\global\long\def\OurSequence{\mathcal{I}}

\title{Exact saturation in simple and NIP theories}

\author{Itay Kaplan, Saharon Shelah and Pierre Simon}

\thanks{The first author would like to thank the Israel Science foundation
for partial support of this research (Grant no. 1533/14). }

\thanks{The research leading to these results has received funding from the
European Research Council, ERC Grant Agreement n. 338821. No. F1473
on the third author's list of publications.}

\thanks{Partially supported by ValCoMo (ANR-13-BS01-0006).}

\address{Itay Kaplan \\
The Hebrew University of Jerusalem\\
Einstein Institute of Mathematics \\
Edmond J. Safra Campus, Givat Ram\\
Jerusalem 91904, Israel}

\email{kaplan@math.huji.ac.il}

\urladdr{https://sites.google.com/site/itay80/ }

\address{Saharon Shelah\\
The Hebrew University of Jerusalem\\
Einstein Institute of Mathematics \\
Edmond J. Safra Campus, Givat Ram\\
Jerusalem 91904, Israel}

\address{Saharon Shelah \\
Department of Mathematics\\
Hill Center-Busch Campus\\
Rutgers, The State University of New Jersey\\
110 Frelinghuysen Road\\
Piscataway, NJ 08854-8019 USA}

\email{shelah@math.huji.ac.il}

\urladdr{http://shelah.logic.at/}

\address{Pierre Simon\\
Institut Camille Jordan\\
Université Claude Bernard - Lyon 1\\
43 boulevard du 11 novembre 1918\\
69622 Villeurbanne Cedex, France}

\email{simon@math.univ-lyon1.fr}

\urladdr{http://www.normalesup.org/\textasciitilde{}simon/}

\subjclass[2010]{03C45, 03C95, 03C55.}
\begin{abstract}
A theory $T$ is said to have exact saturation at a singular cardinal
$\kappa$ if it has a $\kappa$-saturated model which is not $\kappa^{+}$-saturated.
We show, under some set-theoretic assumptions, that any simple theory
has exact saturation. Also, an NIP theory has exact saturation if
and only if it is not distal. This gives a new characterization of
distality.
\end{abstract}

\maketitle

\section{Introduction}

A first order theory $T$ has exact saturation at $\kappa$ if it
has a $\kappa$-saturated model which is not $\kappa^{+}$-saturated.
When $\kappa>|T|$ is regular, then any theory has exact saturation
at $\kappa$ (see Fact \ref{fact:Unstable - regular}), hence we are
only interested in the case $\kappa$ singular.

Possibly adding set-theoretic assumptions, we expect that for a given
theory $T$, having exact saturation at a singular cardinal $\kappa$
does not depend on $\kappa$, and that this property is an interesting
dividing line within first order theories. We indeed show this for
stable, simple and NIP theories.

The second author has shown previously \cite[IV, Lemma 2.18]{Sh:c}
that stable theories have exact saturation at any $\kappa$. Since
this is not stated exactly in this form there, and also for completeness,
we added a proof here (see Theorem \ref{thm:stable}). He also showed
that an NIP theory with an infinite indiscernible set has exact saturation
at any singular $\kappa$ with $2^{\kappa}=\kappa^{+}$ (\cite[Claim 2.26]{Sh900}). 

We establish here the precise dividing line for NIP theories: with
the same assumptions on $\kappa$, an NIP theory has exact saturation
at $\kappa$ if and only if it is not distal. This gives a new characterization
of distality within NIP theories, and allows an answer to Question
2.30 from \cite{Sh900}. See Corollary \ref{cor:Answer to 900}.

We also generalize the result on stable theories to simple theories:
let $T$ be simple and assume that $\kappa$ is singular of cofinality
greater than $|T|$, $2^{\kappa}=\kappa^{+}$ and $\square_{\kappa}$
holds, then $T$ has exact saturation at $\kappa$.

\section{Definitions and first results}
\begin{defn}
\label{def:Exact saturation}Suppose $T$ is a first order theory
and $\kappa$ is a cardinal. We say that $T$ \emph{has exact saturation
at $\kappa$} if $T$ has a $\kappa$-saturated model $M$ which is
not $\kappa^{+}$-saturated.
\end{defn}

We will use the following notion throughout the paper.
\begin{defn}
\label{def:D-types}Let $T$ be any complete theory. Suppose that
$D$ is a collection of finitary types over some set $A$. A set $B$
is a \emph{$D$-set }if for every finite tuple $b$ from $B$, $\tp\left(b/A\right)\in D$.
For A $D$-set $B\supseteq A$, a type $p\in S^{<\omega}\left(B\right)$
is called a\emph{ $D$-type }if $Bd$ is a $D$-set for some (any)
$d\models p$. A \emph{$D$-model} is a model of $T$ which is a $D$-set. 
\end{defn}

\subsection{Stable theories}

Suppose $T$ is a stable theory. This part is not new, but it is short,
and we keep it for completeness. 
\begin{fact}
\label{fact:stable-isolation}\cite[IV, Lemma 2.18]{Sh:c}Suppose
$p\left(x\right)$ is a partial type over $B\subseteq A$. Then there
is $A_{0}\subseteq A$ of size $\leq\left|T\right|$, and an extension
$q\supseteq p$ over $BA_{0}$ which isolates a complete type over
$A$.\end{fact}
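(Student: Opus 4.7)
The plan is to fix a complete type $p^{*} \in S(A)$ extending $p$ (by Zorn), and to reduce the statement to the classical stability-theoretic isolation fact: in a stable theory, every $p^{*} \in S(A)$ is isolated over a ``small'' piece, namely there exist $A_{0} \subseteq A$ with $|A_{0}| \leq |T|$ and a partial type $q_{0} \subseteq p^{*}$ over $A_{0}$ with $|q_{0}| \leq |T|$ such that $p^{*}$ is the unique completion of $q_{0}$ in $S(A)$. Granting this, $q := p \cup q_{0}$ is a partial type over $BA_{0}$ extending $p$, and any completion of $q$ in $S(A)$ must extend $q_{0}$ and hence equal $p^{*}$; this gives the conclusion.

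To prove the isolation fact, I would argue by contradiction. Assume no such $(A_{0}, q_{0})$ works. Inductively construct, for $\eta \in {}^{<|T|^{+}}2$, a partial type $r_{\eta}$ extending $p$, a formula $\varphi_{\eta}(x, y)$, and a parameter $a_{\eta} \in A$ so that $r_{\eta}$ is over $B \cup A_{\eta}$ with $A_{\eta} := \{a_{\nu} : \nu \subsetneq \eta\}$, $r_{\langle \rangle} = p$, $r_{\eta \concat i} := r_{\eta} \cup \{\varphi_{\eta}(x, a_{\eta})^{i}\}$ is consistent for $i=0,1$, and unions are taken at limits. At each stage $|A_{\eta}| \leq |T|$, so applying the contradictory hypothesis to $A_{\eta}$ and $r_{\eta}$ provides a pair of incompatible complete extensions of $r_{\eta}$ to $S(A)$, differing on some formula $\varphi_{\eta}(x, a_{\eta})$ with $a_\eta \in A$.

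The resulting $2^{|T|^{+}}$ branches give $2^{|T|^{+}}$ pairwise inconsistent partial types, extending to distinct complete types over $C := B \cup \{a_{\eta} : \eta \in {}^{<|T|^{+}}2\}$. The tree has $\sum_{\alpha < |T|^{+}} 2^{|\alpha|} = 2^{|T|}$ internal nodes, hence $|C| \leq |B| + 2^{|T|}$. If $|B| \leq 2^{|T|}$ then $|C| \leq 2^{|T|}$ and the stability bound $|S(C)| \leq |C|^{|T|}$ yields $|S(C)| \leq 2^{|T|} < 2^{|T|^{+}}$, a contradiction.

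The main obstacle is the case of large $|B|$, where the naive cardinal count breaks down. The cleaner route I would take is to extract the order property directly from the tree, bypassing cardinal arithmetic: by pigeonhole there are only $|T|$ possible formulas $\varphi_{\eta}$, so one passes to a subtree of the same height on which $\varphi_{\eta}$ is a fixed formula $\varphi$; an Erd\H os--Rado / Ramsey extraction along a sufficiently long branch then produces an indiscernible sequence witnessing that $\varphi$ has the order property, contradicting stability of $T$ independently of $|B|$.
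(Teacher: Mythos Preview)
Your reduction to the ``classical isolation fact'' is where the argument breaks: the statement you aim for --- that \emph{every} complete $p^{*}\in S(A)$ is isolated by some $q_{0}\subseteq p^{*}$ over a subset $A_{0}\subseteq A$ of size $\leq|T|$ --- is simply false in stable theories. Take $T$ the theory of an equivalence relation with infinitely many infinite classes, let $A=\{a_{i}:i<\lambda\}$ with $\lambda>|T|$ and the $a_{i}$ in pairwise distinct classes, and let $p^{*}$ be the ``new class'' type $\{\neg E(x,a_{i}):i<\lambda\}$. For any $A_{0}\subseteq A$ with $|A_{0}|\leq|T|$, every $a_{j}\in A\setminus A_{0}$ realizes $p^{*}|_{A_{0}}$ (hence any $q_{0}\subseteq p^{*}$ over $A_{0}$) while failing $p^{*}$. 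So no such $q_{0}$ isolates $p^{*}$. The point is that the fact in the paper only promises that \emph{some} complete extension of $p$ is isolated by a small $q$, not a pre-chosen one.

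This also explains why your tree construction does not go through as written: your contradictory hypothesis concerns the fixed $p^{*}$, so it only lets you split a node $r_{\eta}$ when $r_{\eta}\subseteq p^{*}$. After the very first branching, one child already leaves $p^{*}$ and you have no hypothesis to apply to it. If instead you assume the negation of the actual statement (no extension of $p$ over any $BA_{0}$ with $|A_{0}|\leq|T|$ isolates a complete type over $A$), the splitting step is legitimate; but then the ``extract the order property along a branch'' finish needs real work: along a single branch you only get, for a fixed $\varphi$, realizations $c_{\alpha}$ with $\varphi(c_{\alpha},b_{\alpha})\wedge\bigwedge_{\gamma<\alpha}\neg\varphi(c_{\alpha},b_{\gamma})$, and you still have to rule out the ``diagonal'' pattern $\varphi(c_{\alpha},b_{\gamma})\leftrightarrow\alpha=\gamma$, which is not by itself an order property.

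By contrast, the paper's proof avoids all of this by building $q$ directly rather than by contradiction: enumerate the formulas $\varphi_{i}(x,y_{i})$, and at stage $i$ add to $p_{i}$ a single formula $\psi(x)$ over $A$ of minimal $R_{2,\varphi_{i}}$-rank consistent with $p_{i}$; this pins down the $\varphi_{i}$-type over $A$. The resulting $q=\bigcup_{i<|T|}p_{i}$ uses $\leq|T|$ parameters from $A$ and isolates a complete type. Note that this construction \emph{chooses} which complete type to isolate as it goes --- exactly what your approach of fixing $p^{*}$ first forbids.
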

\begin{proof}
Enumerate the formulas $\sequence{\varphi_{i}\left(x,y_{i}\right)}{i<\left|T\right|}$.
Construct an increasing continuous sequence of types $\sequence{p_{i}}{i<\left|T\right|}$
where $p_{0}=p$ such that $p_{i+1}$ isolates a complete $\varphi_{i}$-type
over $A$ and $\left|p_{i+1}\backslash p_{i}\right|=1$. To find $p_{i+1}$,
let $\psi\left(x\right)$ be a formula over $A$ with minimal $R_{2,\varphi_{i}}$-rank
consistent with $p_{i}$ (which exists by stability), and let $p_{i+1}=p_{i}\cup\left\{ \psi\right\} $.
Finally, let $q=\bigcup_{i<\left|T\right|}p_{i}$. \end{proof}
\begin{thm}
\label{thm:stable}Assume $T$ is stable. Then for all $\kappa>\left|T\right|$,
$T$ has exact saturation at $\kappa$. \end{thm}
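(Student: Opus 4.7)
The plan is to construct $M$ as a ``primary'' extension, i.e., an $F^{a}_{|T|^{+}}$-primary model in the sense of \cite[IV]{Sh:c}, built as a chain of length $\kappa^{+}$ in which Fact \ref{fact:stable-isolation} is invoked at each successor step to realize partial types via extensions that isolate complete types over the current model. The goal is a $\kappa$-saturated $M$ that omits a chosen complete type of size $\kappa$, witnessing the failure of $\kappa^{+}$-saturation.

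First I would fix a small model $M_{0}\models T$, a non-algebraic $p\in S(M_{0})$, and, inside a monster model, a Morley sequence $I=\sequence{d_{i}}{i<\kappa}$ in $p$ over $M_{0}$; then set $B=M_{0}\cup I$, of cardinality $\kappa$. The final model $M$ will contain $B$ but will omit a carefully chosen complete type $p_{*}\in S(B)$, intuitively a ``generic'' completion over $B$ whose isolation requires access to all of $I$.

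Next I would build an increasing chain $\sequence{M_{\alpha}}{\alpha\leq\kappa^{+}}$ by: enlarging $M_{0}$ to a model of size $\kappa$ containing $B$; taking unions at limits; and at successor stages, given a pair $(q_{\alpha},A_{\alpha})$ from a bookkeeping enumeration of all partial types over $<\kappa$-subsets of the current model, applying Fact \ref{fact:stable-isolation} to extend $q_{\alpha}$ to $\tilde{q}_{\alpha}$ over $A_{\alpha}\cup A'_{\alpha}$ with $|A'_{\alpha}|\leq|T|$ isolating a complete type $r_{\alpha}\in S(M_{\alpha})$, realizing $r_{\alpha}$ by some $e_{\alpha}$, and closing $M_{\alpha}\cup\{e_{\alpha}\}$ into a model $M_{\alpha+1}$ of size $\kappa$ by iterating the same procedure. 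Arranging cofinal repetition in the enumeration ensures that in $M=\bigcup_{\alpha<\kappa^{+}}M_{\alpha}$ every partial type over a $<\kappa$-subset is realized, so $M$ is $\kappa$-saturated.

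The main obstacle is verifying that $p_{*}$ is omitted. Each $e_{\alpha}$ added via Fact \ref{fact:stable-isolation} has $\tp(e_{\alpha}/M_{\alpha})$ isolated by a sub-type with parameters from a set of size $\leq|T|<\kappa$, so by induction every $e\in M$ has $\tp(e/B)$ ``controlled'' by a subset of $B$ of size $<\kappa$, in the precise sense of having a small canonical base together with an isolating sub-type. Choosing $p_{*}$ so that no such $<\kappa$-parameterized isolation over any $M_{\alpha}$ extends to $p_{*}$ --- exploiting that $B$ has cardinality exactly $\kappa$ and cooking up $p_{*}$ to depend on all of $I$ --- ensures omission. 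Making this precise is the delicate heart of the proof; it is exactly the role of the $F^{a}_{|T|^{+}}$-primary model theory of \cite[IV]{Sh:c}, with Fact \ref{fact:stable-isolation} as the local isolation ingredient.
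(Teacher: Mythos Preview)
Your outline is the same strategy as the paper's: use Fact~\ref{fact:stable-isolation} repeatedly to build a $\kappa$-saturated model while maintaining a smallness invariant that forces a fixed type over a size-$\kappa$ set to be omitted. But you have left the two decisive points unspecified --- what $p_{*}$ actually is, and what ``$\tp(e/B)$ is controlled by a subset of $B$ of size $<\kappa$'' means precisely and why it propagates through the construction --- and you explicitly defer both to \cite[IV]{Sh:c}. Those two points \emph{are} the proof, so what you have written is a plan rather than an argument. (Note also that ``controlled'' cannot mean ``does not fork over a small subset'', since by local character every type has that property; and your initial step of enlarging $M_{0}$ to a model containing $B$, as well as the internal iterations at successor stages, would each require the invariant to be checked.)

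The paper resolves both points with one definition. Take $I$ an indiscernible set of size $\kappa$ (your Morley sequence is one, by stability), and let $D$ be the collection of finitary types $q$ over $I$ such that $q|_{I_{0}}\vdash q$ for some $I_{0}\subseteq I$ with $|I_{0}|<\kappa$. The omitted type is simply the average $\average{I}{I}{}$: for any small $I_{0}$, any $d\in I\setminus I_{0}$ realizes the restriction but not the full type, so it is not a $D$-type. A short stability argument shows the witnessing $I_{0}$ for a $D$-type can always be shrunk to size $\leq|T|$; combined with Fact~\ref{fact:stable-isolation} this yields: if $A\subseteq B$ are $D$-sets with $|A|<\kappa$ and $p\in S(A)$, then some $a\models p$ has $aB$ again a $D$-set. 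This is exactly your missing inductive step, and it makes the construction of a $\kappa$-saturated $D$-model containing $I$ routine.
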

\begin{proof}
Let $I$ be an indiscernible set of cardinality $\kappa$. Let $D$
be the collection of finitary types $p$ over $I$ such that for some
$I_{0}\subseteq I$ of cardinality $<\kappa$, $p|_{I_{0}}\models p$. 

Suppose that $p\left(x\right)=\tp\left(c/I\right)\in D$, as witnessed
by $I_{0}$. By stability, there is some $I'\subseteq I$ of size
$\leq\left|T\right|$ such that $I\backslash I'$ is indiscernible
over $cI'$ (let $I'$ be the set of parameters appearing in the formulas
defining $\tp\left(c/I\right)$ over $I$, or see Fact \ref{fact:(Shrinking)}
below). Let $I_{0}'=I'\cup I''$ where $I''\subseteq I$ is countable
disjoint from $I'$. Then an easy argument\footnote{Suppose $\varphi\left(x,a\right)\in p$. Let $\psi\left(x,b\right)\in p$,
$b\in I_{0}$ be such that $\psi\left(x,b\right)\models\varphi\left(x,a\right)$.
Apply an automorphism to move $b\cup a$ over $I'$ to $I_{0}'$ to
get $b'\cup a'$. We get that $\psi\left(x,b'\right)\models\varphi\left(x,a'\right)$.
By indiscernibility over $cI'$ and as $\psi\left(x,b\right)\in p$,
so is $\psi\left(x,b'\right)\in p$, so $c\models\varphi\left(x,a'\right)$,
but then by the same reason $c\models\varphi\left(x,a\right)$. } gives us that $p|_{I_{0}'}\models p$. This shows that we can always
assume that $I_{0}$ has size $\leq\left|T\right|$. 

By Fact \ref{fact:stable-isolation}, we can construct a $\kappa$-saturated
$D$-model $M$ containing $I$ of cardinality $2^{\kappa}$. It is
enough to show that given $D$-sets $A\subseteq B$ where $\left|A\right|<\kappa$,
and some type $p\in S\left(A\right)$, there is some realization $a\models p$
such that $aB$ is a $D$-set. We may assume that $I\subseteq B$.
By Fact \ref{fact:stable-isolation}, there is some $A_{0}\subseteq B$
such that $\left|A_{0}\right|\leq\left|T\right|$ and a type $p_{0}\supseteq p$
over $A_{0}A$ such that $p_{0}$ isolates a complete type over $B$.
Let $a\models p_{0}$. Then $aB$ is a $D$-set: for a finite tuple
$c$ from $B$, let $I_{0}$ be such that $\tp\left(A_{0}Ac/I_{0}\right)\vdash\tp\left(A_{0}Ac/I\right)$
and $\left|I_{0}\right|\leq\left|A_{0}\right|+\left|A\right|+\left|T\right|<\kappa$.
Then $\tp\left(ac/I_{0}\right)\vdash\tp\left(ac/I\right)$. 

Now note that $\average II{}$ which is a type of a new element in
$I$ over $I$ is not a $D$-type, so $M$ is not $\kappa^{+}$-saturated. 
\end{proof}

\subsection{Unstable theories}

Recall that a type $p\left(x\right)\in S\left(M\right)$ is called
\emph{invariant} over $A\subseteq M$ if it does not split over $A$:
if $a,b\in M$ are such that $a\equiv_{A}b$ then for any formula
$\varphi\left(x,y\right)$, $\varphi\left(x,a\right)\in p\Leftrightarrow\varphi\left(x,b\right)\in p$. 
\begin{fact}
\label{fact:Unstable - regular}If $T$ is not stable then $T$ has
exact saturation at any regular $\left|T\right|<\kappa$.
\begin{proof}
Let $M_{0}\models T$ be of size $\left|T\right|$. For $i\leq\kappa$,
define a continuous increasing sequence of models $M_{i}$ where $\left|M_{i+1}\right|=2^{\left|M_{i}\right|}$
and $M_{i+1}$ is $\left|M_{i}\right|^{+}$-saturated. Hence $M_{\kappa}$
is $\kappa$-saturated and $\left|M_{\kappa}\right|=\beth_{\kappa}\left(\left|T\right|\right)$.

As $T$ is unstable, $\left|S\left(M_{\kappa}\right)\right|>\beth_{\kappa}\left(\left|T\right|\right)$.
However, the number of types over $M_{\kappa}$ invariant over $M_{i}$
is $\leq2^{2^{\left|M_{i}\right|}}\leq\beth_{\kappa}\left(\left|T\right|\right)$,
there is $p\left(x\right)\in S\left(M_{\kappa}\right)$ which splits
over every $M_{i}$. Hence for each $i<\kappa$, there is some formula
$\varphi_{i}\left(x,y\right)$ and some $a_{i},b_{i}\in M_{\kappa}$
such that $a_{i}\equiv_{M_{i}}b_{i}$ and $\varphi_{i}\left(x,a_{i}\right)\land\neg\varphi_{i}\left(x,b_{i}\right)\in p$.
Let $q\left(x\right)$ be $\set{\varphi_{i}\left(x,a_{i}\right)\land\neg\varphi_{i}\left(x,b_{i}\right)}{i<\kappa}$.
Then $q$ is not realized in $M_{\kappa}$. 
\end{proof}
\end{fact}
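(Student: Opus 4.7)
The plan is to construct a $\kappa$-saturated model $M$ large enough that a carefully chosen partial type of size $\kappa$ cannot be realized in it. First I would build an increasing continuous chain $(M_i)_{i\leq\kappa}$ of models of $T$: start with $M_0$ of cardinality $|T|$; at each successor take $M_{i+1}$ to be $|M_i|^+$-saturated of size $2^{|M_i|}$; take unions at limits. Set $M := M_\kappa$, of cardinality $\lambda := \beth_\kappa(|T|)$. Since $\kappa$ is regular and the chain is continuous, every $A\subseteq M$ with $|A|<\kappa$ sits in some $M_i$ with $i<\kappa$, so any type over $A$ is realized in $M_{i+1}\subseteq M$; hence $M$ is $\kappa$-saturated.

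The next step is to produce a consistent partial type of size $\kappa$ which is omitted in $M$. I would use a counting argument based on invariance and splitting. On the one hand, since $T$ is unstable one should have $|S(M)|>\lambda$, by an instability argument (e.g.\ the order property applied to the saturated pieces $M_i$). On the other hand, for each $i<\kappa$ the number of $p\in S(M)$ invariant over $M_i$ is at most $2^{|T|\cdot|S(M_i)|}\leq 2^{2^{|M_i|}}$, because such a $p$ is determined by its action on pairs $(\varphi,q)$ with $\varphi$ a formula and $q\in S(M_i)$. Summing over $i<\kappa$ yields at most $\lambda$ types invariant over \emph{some} $M_i$, so some $p\in S(M)$ must split over every $M_i$. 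For each such $i$, pick a splitting witness: a formula $\varphi_i(x,y)$ and parameters $a_i,b_i\in M$ with $a_i\equiv_{M_i} b_i$ and $\varphi_i(x,a_i)\wedge\neg\varphi_i(x,b_i)\in p$.

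Finally, set $q(x):=\{\varphi_i(x,a_i)\wedge\neg\varphi_i(x,b_i):i<\kappa\}$. It is consistent, being contained in $p$, and cannot be realized in $M$: any hypothetical realization $c\in M$ lies in some $M_j$ with $j<\kappa$ by regularity of $\kappa$ and continuity of the chain, and then the $j$-th conjunct contradicts $a_j\equiv_{M_j} b_j$, since $c\in M_j$ turns $\varphi_j(c,y)$ into an $M_j$-definable condition. The main obstacle I expect is a clean justification of $|S(M)|>\lambda$: the trivial upper bound is $2^\lambda$, and turning instability into strictly more than $\lambda$ types \emph{over $M$ itself} (as opposed to over some model of the same size) requires leveraging the successive saturation built into the chain to encode an instability witness inside $M$.
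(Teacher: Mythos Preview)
Your proposal is correct and matches the paper's proof essentially step by step: the same chain $(M_i)_{i\le\kappa}$, the same counting of invariant types against $|S(M_\kappa)|>\beth_\kappa(|T|)$, the same splitting witnesses $\varphi_i,a_i,b_i$, and the same omitted type $q$. You are even more explicit than the paper about why $q$ cannot be realized in $M_\kappa$, and you correctly flag the one step the paper also leaves implicit (that $|S(M_\kappa)|>\lambda$ uses the successive saturation built into the chain, not just instability in the abstract).
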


\section{Simple theories}

In the following definition, $\Lim\left(A\right)$ for a set of ordinals
$A$ is the set of ordinals $\delta\in A$ which are limits of ordinals
in $A$. 
\begin{defn}
\label{def:square}(\emph{Jensen's Square principle}, \cite[Page 443]{JechSetTheory})
Let $\kappa$ be an uncountable cardinal; $\square_{\kappa}$ (square-$\kappa$)
is the following condition:

There exists a sequence $\sequence{C_{\alpha}}{\alpha\in\Lim\left(\kappa^{+}\right)}$
such that:
\begin{enumerate}
\item $C_{\alpha}$ is a closed unbounded subset of $\alpha$.
\item If $\beta\in\Lim\left(C_{\alpha}\right)$ then $C_{\beta}=C_{\alpha}\cap\beta$.
\item If $\cof\left(\alpha\right)<\kappa$, then $\left|C_{\alpha}\right|<\kappa$. 
\end{enumerate}
\end{defn}
\begin{rem}
\label{rem:Square'}Suppose that $\sequence{C_{\alpha}}{\alpha\in\Lim\left(\kappa^{+}\right)}$
witness $\square_{\kappa}$. Let $C_{\alpha}'=\Lim\left(C_{\alpha}\right)$.
Then the following holds for $\alpha\in\Lim\left(\kappa^{+}\right)$. 
\begin{enumerate}
\item If $C_{\alpha}'\neq\emptyset$, then either $\sup\left(C_{\alpha}'\right)=\alpha$,
or $C_{\alpha}'$ has a last element $<\alpha$ in which case $\cof\left(\alpha\right)=\omega$.

\item $C_{\alpha}'\subseteq\Lim\left(\alpha\right)$ and for all $\beta\in C_{\alpha}'$,
$C_{\alpha}'\cap\beta=C_{\beta}'$. 
\item If $\cof\left(\alpha\right)<\kappa$, then $\left|C_{\alpha}'\right|<\kappa$.
\end{enumerate}
\end{rem}
\begin{thm}
\label{thm:Main-Simple theories}Suppose that $T$ is simple, $\mu$
is singular with $\left|T\right|<\kappa=\cof\left(\mu\right)$, $\mu^{+}=2^{\mu}$
and $\square_{\mu}$ holds. Then $T$ has exact saturation at $\mu$. 
\end{thm}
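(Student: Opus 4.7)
The strategy mirrors the stable-case proof of Theorem \ref{thm:stable}, with three modifications: literal isolation by a single formula is replaced by a non-forking flavored notion; the indiscernible sequence has length $\mu$ rather than $\kappa$; and $\square_\mu$ is used to control limits of small cofinality in the construction. Fix an indiscernible sequence $I=\langle a_i : i<\mu\rangle$ in the monster. Define a class $D$ of finitary types over $I$ so that (i) given $D$-sets $A\subseteq B$ with $|A|<\mu$ and $p\in S(A)$, some realization $\bar c\models p$ has $B\bar c$ a $D$-set, and (ii) the type $\operatorname{Av}(I/I)$ of a new element extending $I$ to an indiscernible sequence is not a $D$-type. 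The precise definition --- a simple-theoretic substitute for the stable notion "isolated by a restriction to a small subset of $I$" --- is part of the proof: roughly, $p$ is a $D$-type if it is the unique non-forking extension, up to Lascar strong type, of its restriction to some $I_0\subseteq I$ of size $<\mu$, possibly together with one finite formula with parameters in $I$.

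The engine of the construction is a simple-theoretic analog of Fact \ref{fact:stable-isolation}: any type $p\in S(A)$ does not fork over some $A_0\subseteq A$ with $|A_0|\le|T|$, and the non-forking extension of its restriction to $A_0$ is unique up to Lascar strong type, by the Independence Theorem over models or Lascar bases. A single finite formula then pins down the Lascar strong type. Successor steps in the construction of $M$ then proceed exactly as in the stable proof: given $D$-sets $A\subseteq B$ with $|A|<\mu$ and $p\in S(A)$, one extracts a realization $\bar c\models p$ with $B\bar c$ a $D$-set.

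Build $M=\bigcup_{\alpha<\mu^+}M_\alpha$ by transfinite induction, with each $M_\alpha$ a $D$-set of size $\le\mu$, continuous and increasing. Since $2^\mu=\mu^+$, enumerate all pairs $(A,p)$ with $A\subseteq M$ of size $<\mu$ and $p\in S(A)$, realizing each at a designated successor stage. Limit stages of cofinality $\ge\mu$ are routine, since $D$-witnesses are local. The crux is at limit ordinals $\alpha<\mu^+$ with $\cof(\alpha)<\mu$: a naive union of the $D$-witnesses $I_0$ along the $\alpha$-chain may become cofinal in $I$, violating $|I_0|<\mu$. Here $\square_\mu$ intervenes: by Remark \ref{rem:Square'}, $C_\alpha'\subseteq\alpha$ has cardinality $<\mu$ and satisfies the coherence $C_\alpha'\cap\beta=C_\beta'$ for $\beta\in C_\alpha'$. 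Arrange the chain so that $M_\alpha$ is generated by $\langle M_\beta : \beta\in C_\alpha'\rangle$; then the $D$-witnesses accumulated along $C_\alpha'$ live inside a set of size $<\mu$.

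The main obstacle is precisely this square-coherence for $D$: at a limit $\alpha$ with $\cof(\alpha)<\mu$, one must amalgamate the $D$-bases assigned to elements of $M_\alpha$ across the skeleton $C_\alpha'$ into a single base of size $<\mu$. This requires combining the local character of non-forking with the Independence Theorem to glue non-forking extensions compatibly along the square-skeleton, using $C_\alpha'\cap\beta=C_\beta'$ for the inductive step and ensuring that the Lascar strong type information underlying $D$-ness survives amalgamation. Once $M$ is built, $\operatorname{Av}(I/I)$ is not a $D$-type: no $I_0\subseteq I$ of size $<\mu$ together with a finite formula can uniquely determine the continuation of the indiscernible sequence $I$, because elements of $I\setminus I_0$ play symmetric roles in any such formula. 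Hence $M$ omits $\operatorname{Av}(I/I)$, so $M$ is $\mu$-saturated but not $\mu^+$-saturated, establishing exact saturation at $\mu$.
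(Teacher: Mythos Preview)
Your proposal has the right overall shape, but two load-bearing pieces are missing or wrong, and the paper's proof handles both quite differently.

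\textbf{The class $D$ and the omitted type.} Your sketch of $D$ --- ``unique non-forking extension up to Lascar strong type of its restriction to a small $I_0$, plus perhaps one formula'' --- is not workable for showing that $\average{I}{I}{}$ fails to be a $D$-type. In a simple theory the average type of an indiscernible sequence is invariant (hence non-forking) over any cofinal piece of $I$, and your symmetry argument about $I\setminus I_0$ does not rule out that it is the \emph{only} such extension up to Lascar strong type. The paper does not attempt this. Instead it splits off the stable case (where Theorem~\ref{thm:stable} applies directly) and in the unstable simple case uses the independence property: it takes $I$ to witness IP for some $\varphi$, defines $D_i$ for each level $i$ via ``some tail $I_i^{\geq\alpha}$ is indiscernible over the realization plus any finite set'', and the omitted type is not $\average{I}{I}{}$ at all but the IP-type $\{\varphi(x,a_j):j\text{ even}\}\cup\{\neg\varphi(x,a_j):j\text{ odd}\}$, which visibly destroys tail-indiscernibility.

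\textbf{The stratification and the role of $\square_\mu$.} You build a single increasing chain $\langle M_\alpha:\alpha<\mu^+\rangle$ of $D$-sets and invoke $\square_\mu$ to keep witnesses small at limits. But note that \emph{every} limit $\alpha<\mu^+$ has cofinality $<\mu$ (since $\mu$ is singular), so your ``routine'' case is empty and the ``crux'' case is all limits. More seriously, a single $D$-class does not give you enough room: the paper works not with sets but with $\kappa$-sequences $\bar A=\langle A_i:i<\kappa\rangle$ in a class $\SquareClass$, where $A_i$ is a $D_i$-set of size $\leq\lambda_i$ for a fixed cofinal sequence $\lambda_i\nearrow\mu$. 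The order is $\leqstar$, which allows the first nonempty index to shift upward. At a limit $\alpha$, one sets the new starting index $i_0$ just above $|C_\alpha|$ and takes $A_{\alpha,i}=\bigcup_{\beta\in C_\alpha}A_{\beta,i}$ for $i\geq i_0$; the coherence of $\square_\mu$ guarantees this union is still a $D_i$-set of size $\leq\lambda_i$. Without this level-by-level stratification there is no mechanism for ``discarding'' the bottom part at limits, and your single-$D$ union has no reason to remain a $D$-set. The Independence Theorem is used in the paper not to glue witnesses along the square skeleton, but inside the Main Lemma, to extend a type one level at a time while preserving tail-indiscernibility.
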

The square assumption will only be used in the  end of the proof. 

Towards the proof, let us first fix an increasing continuous sequence
$\sequence{\lambda_{i}}{i<\kappa}$ of cardinals whose limit is $\mu$
such that $\lambda_{i+1}$ is regular for all $i<\kappa$ and such
that $\lambda_{0}>\kappa$. 

If $T$ is stable, then we already know that $T$ has exact saturation
at $\mu$ by Theorem \ref{thm:stable}. So assume that $T$ is not
stable.

As it is simple, by e.g., \cite[Exercises 8.2.5, 8.2.6]{TentZiegler},
it has the independence property. Let $\varphi\left(x,y\right)$ witness
this. 
\begin{notation}
\label{notation: sum of orders}For a sequence of linear orders $\sequence{\left(X_{i}<_{i}\right)}{i\in I}$
where $I$ is linearly ordered by $<$, let $\sum_{i\in I}X_{i}$
be the linear order whose set of elements is $\bigcup\set{X_{i}\x\left\{ i\right\} }{i\in I}$
ordered by $\left(x,i\right)<\left(y,j\right)$ iff $i<j$ or $i=j$
and $x<_{i}y$. 
\end{notation}
Let $\Succ\left(\kappa\right)=\kappa\backslash\Lim\left(\kappa\right)$.
For $i\in\Succ\left(\kappa\right)$, let $I_{i}$ be the linear order
$\lambda_{i}$, and let $I=\sum_{i\in\Succ\left(\kappa\right)}I_{i}$.
Let $\sequence{a_{i}}{i\in I}$ be an indiscernible sequence witnessing
that $\varphi$ has the independence property. I.e., for every subset
$s\subseteq I$, there is some $b_{s}$ such that $\varphi\left(b_{s},a_{i}\right)$
holds iff $i\in s.$ Abusing notation, we will write $I_{i}=\sequence{a_{j}}{j\in I_{i}}$
and similarly for $I$. 
\begin{defn}
\label{def:good}For $i\in\Succ\left(\kappa\right)$, let $D_{i}$
be the collection of finitary types $p\in S^{<\omega}\left(I_{i}\right)$
such that for any finite $s\subseteq I_{i}$ there is some $\alpha<\lambda_{i}$
such that $I_{i}^{\geq\alpha}$ (i.e., $I_{i}\upharpoonright\left[\alpha,\lambda_{i}\right)$)
is indiscernible over $s\cup d$ for some (any) $d\models p$. \end{defn}
\begin{rem}
\label{rem:Equivalent definition of D_i}Note that since $\lambda_{i}$
is regular when $i$ is a successor, a set $A$ is a $D_{i}$-set
iff for any subset $C\subseteq A$, $\left|C\right|<\lambda_{i}$,
there is some $\alpha<\lambda_{i}$ such that $I_{i}^{\geq\alpha}$
is indiscernible over $C\cup I_{i}^{<\alpha}$. Indeed, given $C$,
for every finite set $s\subseteq C$, let $\alpha_{s}$ be as in Definition
\ref{def:good} (for $s=\emptyset$). Let $\alpha_{0}=\sup\set{\alpha_{s}}{s\subseteq C\mbox{ finite}}$.
Let $\alpha_{0}<\alpha_{1}$ be defined similarly for $C\cup I_{i}^{<\alpha_{0}}$.
Continue and finally put $\alpha=\sup\set{\alpha_{n}}{n<\omega}$. \end{rem}
\begin{defn}
\label{def:The square class}Let $\SquareClass$ be the class of sequences
$\bar{A}=\sequence{A_{i}}{i<\kappa}$ such that:
\begin{itemize}
\item For some $i_{0}\in\Succ\left(\kappa\right)$, for all $i_{0}\leq i\in\Succ\left(\kappa\right)$,
$I_{i}\subseteq A_{i}$, and for all $i<i_{0}$, $A_{i}=\emptyset$;
$\sequence{A_{i}}{i<\kappa}$ is increasing and continuous and $\left|A_{i}\right|\leq\lambda_{i}$
for all $i\in\Succ\left(\kappa\right)$.
\item For all $i\in\Succ\left(\kappa\right)$, $A_{i}$ is a $D_{i}$-set. 
\end{itemize}
\end{defn}

\begin{defn}
\label{def:order}For $\bar{A},\bar{B}\in\SquareClass$, write $\bar{A}\leq_{i}\bar{B}$
for: for all $i\leq j<\kappa$, $A_{j}\subseteq B_{j}$;$\bar{A}\leq\bar{B}$
for: $\bar{A}\leq_{0}\bar{B}$; and $\bar{A}\leqstar\bar{B}$ for:
there is some $i<\kappa$ such that $\bar{A}\leq_{i}\bar{B}$. \end{defn}
\begin{prop}
\label{prop:Model}Given $\bar{A}\in\SquareClass$, there is $\bar{A}\leq\bar{B}\in\SquareClass$
such that for all $i\in\Succ\left(\kappa\right)$, $B_{i}$ is either
$\emptyset$ or a model of $T$. \end{prop}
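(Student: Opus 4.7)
I would construct $\bar{B}$ by transfinite induction on $i < \kappa$: set $B_i = \emptyset$ for $i < i_0$, take unions at limits $i \in \Lim(\kappa)$ (so continuity is automatic), and at each successor $i \in \Succ(\kappa)$ with $i \geq i_0$ extend $A_i \cup \bigcup_{j<i} B_j$ to a model $B_i$ of $T$ of size $\leq \lambda_i$ through a bookkeeping chain of length $\lambda_i$ that adds witnesses for existential formulas. For this to make sense at a successor stage, $\bigcup_{j<i} B_j$ must already be a $D_i$-set, which I ensure by strengthening the inductive hypothesis to demand that every $B_j$ built so far is a $D_{i'}$-set for each $i' \in \Succ(\kappa)$ with $i' \geq j$. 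This stronger property is preserved by increasing unions, so at stage $i$ the starting set $A_i \cup \bigcup_{j<i} B_j$ is automatically a $D_{i'}$-set for every $i' \geq i$, and the construction reduces to iterated application of a single key step.

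The key step is the following \textbf{Key Lemma}: if $C$ has size $\leq \lambda_i$ and is a $D_{i'}$-set for every $i' \in \Succ(\kappa)$ with $i' \geq i$, $\bar{a} \in C$, and $\phi(x, \bar{a})$ is consistent, then there is $c \models \phi(x, \bar{a})$ such that $C \cup \{c\}$ is still a $D_{i'}$-set for every such $i'$. My approach is to handle all $i' \geq i$ at once by working with the full global indiscernible $I = \sum_{i' \in \Succ(\kappa)} I_{i'}$. Pick $c_0 \models \phi(x, \bar{a})$, and use Ramsey / Erd\H{o}s--Rado (which applies given a sufficiently fast-growing choice of the cofinal sequence $\langle \lambda_i \rangle$, available under our hypotheses) to extract an infinite subsequence of $I$ that is indiscernible over $c_0 \cup C$. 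By compactness, stretch this subsequence to a $c_0$-indiscernible sequence $J$ of the same order type as $I$, having the same EM-type over $C$ as $I$ (which inherits from the fact that any subsequence of an indiscernible has the same EM-type over the base). Since $I$ and $J$ are then both indiscernible sequences of the same order type and EM-type over $C$, there is an automorphism $\sigma$ fixing $C$ (hence $\bar{a}$) with $\sigma(J) = I$. Setting $c := \sigma(c_0)$, we have $c \models \phi(x, \bar{a})$ and $I$ is indiscernible over $C \cup \{c\}$. This immediately yields the desired conclusion: for each $i' \geq i$, finite $\bar{b} \in C$, and finite $s \subseteq I_{i'}$, the end-segment $I_{i'}^{\geq \alpha}$ for $\alpha$ greater than the indices of $s$ remains indiscernible over $s \cup \bar{b} \cup \{c\}$.

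\textbf{Main obstacle.} The crux is the extraction step: producing an infinite subsequence of $I$ indiscernible over a parameter set of size $\lambda_i + |T|$ requires $\mu$ to be large enough relative to $\lambda_i$, constraining the choice of $\langle \lambda_i \rangle$ (we have freedom here, since only $\lambda_0 > \kappa$ and regularity of $\lambda_{i+1}$ were stipulated, and the cardinal arithmetic assumption $\mu^+ = 2^\mu$ helps control the tower of $\beth$-cardinals involved). A secondary subtlety is the automorphism-matching step, which is standard but requires care about the order type of $I$. Simplicity of $T$ does not appear to enter this proposition directly; it is presumably used later in the exact-saturation argument to construct a type omitted by the resulting model.
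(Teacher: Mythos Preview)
Your Key Lemma is where the argument breaks. You want to find $c$ so that \emph{all of $I$} is indiscernible over $C \cup \{c\}$, obtained via an automorphism $\sigma$ fixing $C$ with $\sigma(J) = I$. But $I$ is not indiscernible over $C$ in the first place, so no such $\sigma$ can exist. Concretely, $C \supseteq A_i$ already contains $I_j$ for every successor $j \leq i$ (since $I_j \subseteq A_j \subseteq A_i$), so tuples from those blocks of $I$ are named parameters; and even for finite $s \subseteq C$ disjoint from $I$, the $D_{i'}$-property only says that some \emph{tail} $I_{i'}^{\geq \alpha}$ is indiscernible over $s$, not that $I$ is. Your justification ``any subsequence of an indiscernible has the same EM-type over the base'' refers to the base of $I$'s indiscernibility, which is $\emptyset$, not $C$. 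So the extracted subsequence and its stretch $J$ have no reason to realize the same type over $C$ as $I$, and the automorphism step fails. (The Erd\H{o}s--Rado cardinal issue you flag is also fatal: since $\mu = \sup_i \lambda_i$, for large $i$ the set $|C| \leq \lambda_i$ is too close to $|I| = \mu$ for any extraction; no choice of the cofinal sequence helps.)

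The paper's proof avoids both problems by working locally and incrementally. It never tries to make $I$ indiscernible over a large set. Instead, to realize a formula $\varphi$ over $A_i$, it first uses the $D_i$-property to find $\alpha < \lambda_i$ with $I_i^{\geq \alpha}$ indiscernible over the \emph{small} set $c A_{i-1} I_i^{<\alpha}$ (here $c$ is the parameter tuple of $\varphi$). Because this tail is already indiscernible over that base, the standard Ramsey/compactness argument produces $J$ of the same EM-type over $c d A_{i-1} I_i^{<\alpha}$ and indiscernible over that set; now $J$ and $I_i^{\geq \alpha}$ genuinely have the same type over $c A_{i-1} I_i^{<\alpha}$, and the automorphism trick goes through. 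The extension to all of $A_i$ is then done one element $e \in A_i$ at a time (the $(\star)$ step), always keeping the current base of size $< \lambda_i$ so that a suitable tail of $I_i$ remains indiscernible over it. One then continues to $A_{i+1}, A_{i+2}, \ldots$ in the same fashion. No cardinal arithmetic beyond the regularity of $\lambda_{i+1}$ is needed, and (as you correctly guessed) simplicity is not used in this proposition.
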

\begin{proof}
For simplicity assume that $i_{0}=0$ in Definition \ref{def:The square class}.
It is enough to prove the following.
\begin{claim}
Let $i\in\Succ\left(\kappa\right)$. Suppose that $\varphi\left(x\right)$
is a formula over $A_{i}$. Then there is some $\bar{A}\leq\bar{B}\in\SquareClass$
such that $B_{i}$ realizes $\varphi$. 

\end{claim}
\begin{proof}
\renewcommand{\qedsymbol}{}Suppose $\varphi=\varphi\left(x,c\right)$.
Let $\alpha<\lambda_{i}$ be such that $I_{i}^{\geq\alpha}$ is indiscernible
over $A_{i-1}c\cup I_{i}^{<\alpha}$ (see Remark \ref{rem:Equivalent definition of D_i}).
Let $d\models\varphi$. by Ramsey and compactness, there is some sequence
$J$ with the same order type and $EM$-type as $I_{i}^{\geq\alpha}$
over $cdA_{i-1}I_{i}^{<\alpha}$ which is indiscernible over $cdA_{i-1}I_{i}^{<\alpha}$.
Hence $J\equiv_{cA_{i-1}I_{i}^{<\alpha}}I_{i}^{\geq\alpha}$, so apply
an automorphism of $\C$ to move $J$ to $I_{i}^{\geq\alpha}$ over
$cA_{i-1}I_{i}^{<\alpha}$ and let $d'$ be the image of $d$. We
get that $I_{i}^{\geq\alpha}$ is indiscernible over $cd'A_{i-1}I_{i}^{<\alpha}$
and still $d'\models\varphi$, and even $d'\equiv_{A_{i-1}}d$ (this
is not important here, but will be later). Let $p=\tp\left(d'/cA_{i-1}I_{i}\right)$. 

($\star$) Suppose now that we are in a general situation, where we
have some type $p_{1}\in S\left(A_{j}BI_{j+1}\right)$ where $B\subseteq A_{j+1}$
is of cardinality $<\lambda_{j+1}$ and there is some $\alpha<\lambda_{j+1}$
such that for any $d\models p_{1}$, $I_{j+1}^{\geq\alpha}$ is indiscernible
over $dA_{j}BI_{j+1}^{<\alpha}$, and suppose $e\in A_{j+1}$. Then
there is some $\lambda_{j+1}>\beta>\alpha$ such that $I_{j+1}^{\geq\beta}$
is indiscernible over $A_{j}BI_{j+1}^{<\beta}e$. By Ramsey and compactness,
there is some indiscernible sequence $J$ with the same $EM$ and
order type as $I_{j+1}^{\geq\beta}$ over $A_{j}BI_{j+1}^{<\beta}ed$
which is indiscernible over $A_{j}BI_{j+1}^{<\beta}ed$ for some fixed
$d\models p_{1}$. Then $J\equiv_{A_{j}BeI_{j+1}^{<\beta}}I_{j+1}^{\geq\beta}$
and $J\equiv_{A_{j}BdI_{j+1}^{<\beta}}I_{j+1}^{\geq\beta}$. Hence
applying an automorphism fixing $A_{j}BI_{j+1}^{<\beta}e$ which maps
$J$ to $I_{j+1}^{\geq\beta}$, we move $d$ to some $d'$ which still
realizes $p_{1}$ but now $I_{j+1}^{\geq\beta}$ is indiscernible
over $A_{j}BI_{j+1}^{\geq\beta}ed'$. Let $p_{2}=\tp\left(d'/A_{j}BI_{j+1}e\right)\supseteq p_{1}$. 

Using ($\star$) iteratively, taking unions at limit stages, starting
with $p$ we can find some $\varphi\in p_{i}\in S\left(A_{i}\right)$
such that for every $d\models p_{i}$, $A_{i}d$ is a $D_{i}$-set.

We construct an increasing continuous sequence of types, $\sequence{p_{j}}{i\leq j<\kappa}$,
$p_{j}\in S\left(A_{j}\right)$ such that for $j\in\Succ\left(\kappa\right)$,
if $e\models p_{j}$ then $A_{j}e$ is a $D_{j}$-set. We then let
$p=\bigcup_{i\leq j<\kappa}p_{j}$, $d\models p$ and define $B_{j}=A_{j}\cup\left\{ d\right\} $
for all $i\leq j<\kappa$. 

The construction of $p_{j+1}$ uses a weak version of ($\star$) in
the first step (keeping only the type over $A_{j}$, as we did in
the beginning), and then ($\star$) as in the construction of $p_{i}$. 
\end{proof}
\end{proof}
\begin{mainlemma}
\label{lem:Main Lemma simple}Suppose that $\sequence{A_{i}}{i<\kappa}\in\SquareClass$,
and $C\subseteq\bigcup_{i<\kappa}A_{i}$ is such that $\left|C\right|<\mu$.
Let $p\in S\left(C\right)$. Then there is some $\bar{A}\leq\bar{B}\in\SquareClass$
which contains a realization of $p$.\end{mainlemma}
\begin{proof}
Here we use the simplicity of $T$. 

First, by Proposition \ref{prop:Model}, we may assume that for $i\in\Succ\left(\kappa\right)$,
$A_{i}=\emptyset$ or is a model. 

We may assume that there is some $E\subseteq C$ of size $\leq\left|T\right|$
such that $p$ does not fork over $E$ and moreover if $q$ is a type
extending $p$ then $q$ does not fork over $E$. We get this by trying
to construct an increasing continuous sequence $\sequence{\left(p_{\alpha},E_{\alpha}\right)}{\alpha<\left|T\right|^{+}}$
of subsets $E_{\alpha}\subseteq\bigcup_{i<\kappa}A_{i}$ of cardinality
$\leq\left|T\right|$, and complete types $p_{\alpha}$ over $E_{\alpha}\cup C$
extending $p$ starting with $\left(p,\emptyset\right)$ such that
$p_{\alpha+1}|_{E_{\alpha+1}}$ forks over $E_{\alpha}$. By local
character of non-forking in simple theories (see \cite[Proposition 7.2.5]{TentZiegler}),
it follows that we must get stuck at some point in the construction,
say $\alpha$, and let $E=E_{\alpha}$, $p=p_{\alpha}$. 

Let $i_{0}<\kappa$ be a successor ordinal such that $A_{i_{0}}\neq\emptyset$,
$E\subseteq A_{i_{0}}$ and $\left|C\right|<\lambda_{i_{0}}$. (Here
we use the assumption that $\cof\left(\mu\right)=\kappa>\left|T\right|$.) 

Now we make things easier:
\begin{enumerate}
\item Enlarge $C$, so that for all $i_{0}\leq i\in\Succ\left(\kappa\right)$,
$C\cap A_{i}$ is a model of $T$. We can do this by building $C_{i,l}$
for $l<\omega$, $i_{0}\leq i<\kappa$, so that $\sequence{C_{i,l}}{i_{0}\leq i<\kappa}$
is increasing continuous, $C\subseteq C_{i,l}\subseteq C_{i,l'}$
for $l'>l$ and where $C_{i,l}\cap A_{i}$ is a model for $i\in\Succ\left(\kappa\right)$
and $\left|C_{i,l}\right|\leq\lambda_{i_{0}}$. Finally, let $C'=\bigcup\set{C_{i,l}}{i<\kappa,l<\omega}$.
\item Enlarge $C$ again, so that for all $i_{0}\leq i\in\Succ\left(\kappa\right)$,
$C\ind_{C\cap A_{i}}A_{i}$. To achieve this, build again $C_{i,l}$
as above such that for $i\in\Succ\left(\kappa\right)$, $C_{i,l}\ind_{C_{i,l}\cap A_{i}}A_{i}$
(we get this as follows. Start with $C_{i-1,l}$ and by local character
find some $B_{0}\subseteq A_{i}$ of cardinality $\leq\lambda_{i_{0}}$
such that $C_{i-1,l}\ind_{B_{0}}A_{i}$, then let $C_{i-1,l}^{1}=C_{i-1,l}\cup B_{0}$.
Continue this $\omega$ steps and take the union). Finally, let $C'=\bigcup\set{C_{i,l}}{i<\kappa,l<\omega}.$
\item Enlarge $C$ by alternating steps (1) and (2) $\omega$ times, so
that both $C\cap A_{i}$ is a model of $T$ and $C\ind_{C\cap A_{i}}A_{i}$
for all $i\in\Succ\left(\kappa\right)$. 
\end{enumerate}
Now we want to find some $e\models p$ such that $\tp\left(e/A_{i_{0}}\right)$
is a $D_{i_{0}}$-type. Start with any $e\models p$. 

Let $\alpha<\lambda_{i_{0}}$ be such that $I_{i_{0}}^{\geq\alpha}$
is indiscernible over $\left(C\cap A_{i_{0}}\right)I_{i_{0}}^{<\alpha}$.
By Ramsey, there is some $J$ with the same order type and $EM$-type
as $I_{i_{0}}^{\geq\alpha}$ over $\left(C\cap A_{i_{0}}\right)I_{i_{0}}^{<\alpha}e$
which is indiscernible over $\left(C\cap A_{i_{0}}\right)I_{i_{0}}^{<\alpha}e$.
Since $e\ind_{\left(C\cap A_{i_{0}}\right)}I_{i_{0}}$ by the choice
of $E$ and $i_{0}$ above, $e\ind_{\left(C\cap A_{i_{0}}\right)}I_{i_{0}}^{<\alpha}J$
(here we use the fact that $I_{i_{0}}^{\geq\alpha}$ is indiscernible
over $\left(C\cap A_{i_{0}}\right)I_{i_{0}}^{<\alpha}$, see also
below) . By applying an automorphism over $\left(C\cap A_{i_{0}}\right)I_{i_{0}}^{<\alpha}$
taking $J$ to $I_{i_{0}}^{\geq\alpha}$, we can find some $d\equiv_{I_{i_{0}}^{<\alpha}\left(A_{i_{0}}\cap C\right)}e$
such that $d\ind_{C\cap A_{i_{0}}}I_{i_{0}}$ and $I_{i_{0}}^{\geq\alpha}$
is indiscernible over $dI_{i_{0}}^{<\alpha}\left(C\cap A_{i_{0}}\right)$.
By the independence theorem over models in simple theories (see \cite[Theorem 7.3.11]{TentZiegler}),
as $e\ind_{A_{i_{0}}\cap C}C$, $d\ind_{A_{i_{0}}\cap C}I_{i_{0}}$,
$e\equiv_{A_{i_{0}}\cap C}d$ and $C\ind_{A_{i_{0}}\cap C}I_{i_{0}}$,
there is some $d'$ such that $d'\models p$ and $d'\equiv_{\left(C\cap A_{i_{0}}\right)I_{i_{0}}}d$. 

This gives us some $e\models p$ such that $\tp\left(e/I_{i_{0}}\left(C\cap A_{i_{0}}\right)\right)$
is a $D_{i_{0}}$-type. 

Now we use basically the same idea as in the proof of Proposition
\ref{prop:Model}, using the independence theorem: we start with a
$D_{i_{0}}$-type $q_{1}\in S\left(BI_{i_{0}}\right)$ where $C\cap A_{i_{0}}\subseteq B$
, $\left|B\right|<\lambda_{i_{0}}$, consistent with $p$, and we
want to extend it to a $D_{i_{0}}$-type $q_{2}\in S\left(BI_{i_{0}}f\right)$
where $f\in A_{i_{0}}$ which is also consistent with $p$. Let $d\models q_{1}\cup p$.
Let $\alpha<\lambda_{i}$ be such that $I_{i_{0}}^{\geq\alpha}$ is
indiscernible over $dBI_{i_{0}}^{<\alpha}$. Let $\beta>\alpha$ be
such that $I_{i_{0}}^{\geq\beta}$ is indiscernible over $fBI_{i_{0}}^{<\beta}$.
Find $J$ with the same $EM$-type as $I_{i_{0}}^{\geq\beta}$ over
$I_{i_{0}}^{<\beta}Bdf$ such that $J$ is indiscernible over $I_{i_{0}}^{<\beta}Bdf$.
Then $J\equiv_{I_{i_{0}}^{<\beta}Bd}I_{i_{0}}^{\geq\beta}$, and $J\equiv_{I_{i_{0}}^{<\beta}Bf}I_{i_{0}}^{\geq\beta}$.
As $d\models p$, we know that $d\ind_{A_{i_{0}}\cap C}I_{i_{0}}Bf$
(by choice of $d$, $E$ and $i_{0}$), hence also $d\ind_{A_{i_{0}}\cap C}I_{i_{0}}^{<\beta}JBf$
(if $\psi\left(x,j,m\right)$ witnessed forking, where $j\in J$ is
an increasing tuple and $m\in I_{i_{0}}^{<\beta}Bf$, then for some
increasing tuple $j'\in I_{i_{0}}^{\geq\beta}$, $\psi\left(d,j',m\right)$
holds. But $j'm\equiv_{A_{i_{0}}\cap C}jm$ as $I_{i_{0}}^{\geq\beta}$
is indiscernible over $fBI_{i_{0}}^{<\beta}$) . Move $J$ to $I_{i_{0}}^{\geq\beta}$
over $I_{i_{0}}^{<\beta}Bf$, to get some $d'\equiv_{I_{i_{0}}B}d$,
but now $I_{i_{0}}^{\geq\beta}$ is indiscernible over $I_{i_{0}}^{<\beta}Bd'f$
and $d'\ind_{A_{i_{0}}\cap C}I_{i_{0}}Bf.$ Now we can use the independence
theorem as above, and find $q_{2}$. 

Using this technique (constructing an increasing continuous sequence
of types over small subsets of $A_{i_{0}}$ augmented with $I_{i_{0}}$)
we can find some $e\models p$ such that $\tp\left(e/A_{i_{0}}\right)$
is a $D_{i_{0}}$-type. 

Now we may continue. More formally, we find an increasing continuous
sequence of types $p_{i}$ for $i_{0}\leq i<\kappa$ such that:
\begin{itemize}
\item $p_{i_{0}}=\tp\left(e/A_{i_{0}}\right)$; $p_{i}\in S\left(A_{i}\right)$
and for $i\in\Succ\left(\kappa\right)$, $p_{i}$ is a $D_{i}$-type
and $p_{i}\cup p$ is consistent for all $i$.
\end{itemize}
We can do this by using the same technique as in the construction
of $p_{i_{0}}$. Finally, let $p_{\kappa}=\bigcup_{i<\kappa}p_{i}$,
let $e\models p_{\kappa}$, and let $B_{i}=\emptyset$ for $i<i_{0}$
and $A_{i}e$ for $i\geq i_{0}$. 
\end{proof}

\begin{proof}
[Proof of Theorem \ref{thm:Main-Simple theories}.]

Let $\sequence{C_{\alpha}}{\alpha<\mu^{+}}$ be a sequence as in Remark
\ref{rem:Square'}. Note that $\left|C_{\alpha}\right|<\mu$ for all
$\alpha<\mu^{+}$ as $\mu$ is singular. Let $\set{S_{\alpha}}{\alpha<\mu^{+}}$
be a partition of $\mu^{+}$ to sets of size $\mu^{+}$. We construct
a sequence $\sequence{\left(\bar{A}_{\alpha},\bar{p}_{\alpha}\right)}{\alpha<\mu^{+}}$
such that:
\begin{enumerate}
\item \label{enu:in M}$\bar{A}_{\alpha}=\sequence{A_{\alpha,i}}{i<\kappa}\in\SquareClass$;
\item $\bar{p}_{\alpha}$ is an enumeration $\sequence{p_{\alpha,\beta}}{\beta\in S_{\alpha}\backslash\alpha}$
of all complete types over subsets of $\bigcup_{i}A_{\alpha,i}$ of
size $<\mu$ (this uses $\mu^{+}=2^{\mu}$); 
\item \label{enu:star}If $\beta<\alpha$ then $\bar{A}_{\beta}\leqstar\bar{A}_{\alpha}$
(see Definition \ref{def:order}); 
\item \label{enu:realizing types}If $\alpha\in S_{\gamma}$ and $\gamma\leq\alpha$,
then $\bar{A}_{\alpha+1}$ contains a realization of $p_{\gamma,\alpha}$;
\item \label{enu:limitsquare}If $\alpha$ is a limit ordinal, then for
all $i<\kappa$ such that $\left|C_{\alpha}\right|<\lambda_{i}$,
$A_{\alpha,i}\neq\emptyset$ and for all $\beta\in C_{\alpha}$, $\bar{A}_{\beta}\leq_{i}\bar{A}_{\alpha}$. 
\end{enumerate}
Start with $A_{0,i}=I_{i}$ for $i\in\Succ\left(\kappa\right)$ and
otherwise defined by continuity. 

For $\alpha+1$, use Main Lemma \ref{lem:Main Lemma simple}. 

For $\alpha$ limit there are two possibilities.
\begin{casenv}
\item $\sup\left(C'_{\alpha}\right)=\alpha$. Suppose $i_{0}<\kappa$ is
minimal such that $\left|C_{\alpha}\right|<\lambda_{i_{0}}$ (so necessarily
$i_{0}\in\Succ\left(\kappa\right)$). For $i<i_{0}$, let $A_{\alpha,i}=\emptyset$.
For $i\geq i_{0}$ successor, let $A_{\alpha,i}=\bigcup_{\beta\in C_{\alpha}}A_{\beta,i}$.
Note that $\left|A_{\alpha,i}\right|\leq\lambda_{i}$. We have to
show that $\bar{A}_{\alpha}$ satisfies (\ref{enu:in M}), (\ref{enu:star})
and (\ref{enu:limitsquare}). The latter is by construction and the
fact that for $\beta\in C_{\alpha}$, $\left|C_{\beta}\right|\leq\left|C_{\alpha}\right|$.

For (\ref{enu:in M}), suppose $s\subseteq A_{\alpha,i}$ is a finite
set where $i_{0}\leq i\in\Succ\left(\kappa\right)$. For every element
$e\in s$, there is some $\beta_{e}\in C_{\alpha}$ such that $e\in A_{\beta_{e},i}$.
Let $\beta=\max\set{\beta_{e}}{e\in s}$. Then $\beta$ is a limit
ordinal and $C_{\alpha}\cap\beta=C_{\beta}$. As $\left|C_{\beta}\right|<\lambda_{i_{0}}$,
it follows by the induction hypothesis that $s\subseteq A_{\beta,i}$.
As $A_{\beta,i}$ is a $D_{i}$-set for all such $\beta$, it follows
that $A_{\alpha,i}$ is a $D_{i}$-set as well.

Lastly, (\ref{enu:star}) is easy by assumption of the case and transitivity
of $\leqstar$.

\item $\sup\left(C'_{\alpha}\right)<\alpha$. In this case, if $C_{\alpha}\neq\emptyset$,
then it has a last element, and $\cof\left(\alpha\right)=\omega<\kappa$.
If $C_{\alpha}=\emptyset$, choose $\gamma=0$, otherwise, it is the
last element of $C_{\alpha}$. Let $i^{*}<\kappa$ be minimal such
that $\left|C_{\alpha}\right|<\lambda_{i^{*}}$.

Choose a cofinal set $S\subseteq\alpha$ above $\gamma$ of size $\aleph_{0}<\kappa$.
For all $\varepsilon<\zeta\in S$, $\bar{A}_{\varepsilon}\leqstar\bar{A}_{\zeta}$
as is witnessed by some $i_{\varepsilon,\zeta}<\kappa$. As $\kappa$
is regular, there is some $i^{*}<i_{0}\in\Succ\left(\kappa\right)$
such that $\bar{A}_{\varepsilon}\leq_{i_{0}}\bar{A_{\zeta}}$ for
all $\varepsilon<\zeta\in S$. By the same reasoning there is some
$i_{0}<i_{1}\in\Succ\left(\kappa\right)$ such that $\bar{A}_{\gamma}\leq_{i_{1}}\bar{A}_{\varepsilon}$
for all $\varepsilon\in S$. Set $A_{\alpha,i}=A_{\gamma,i}$ for
$i<i_{1}$ and $A_{\alpha,i}=\bigcup_{\beta\in S}A_{\beta,i}$ for
$i\geq i_{1}$.

Now: (\ref{enu:in M}) follows by choice of $i_{0}$ (so that each
$A_{\alpha,i}$ is a $D_{i}$-set) and $i_{1}$ (so that $A_{\alpha,i}$
is increasing with $i$), (\ref{enu:star}) follows by the transitivity
of $\leqstar$, so we are left with (\ref{enu:limitsquare}). The
first part is easy: if $C_{\alpha}=\emptyset$, then it follows by
our choice of $\bar{A}_{0}$. Otherwise, use the fact that $C_{\gamma}\subseteq C_{\alpha}$.

Suppose $\beta\in C_{\alpha}$. Then, either $\beta=\gamma$, in which
case this clause is obvious, or $\beta<\gamma$, in which case $\beta\in C_{\gamma}$.
By the induction hypothesis, $\bar{A}_{\beta}\leq_{i^{*}}\bar{A}_{\gamma}\leq\bar{A}_{\alpha}$,
so we are done by the choice of $i^{*}$. 

\end{casenv}
Finally, let $M=\bigcup_{\alpha<\mu^{+},i<\kappa}A_{\alpha,i}$. Then
$M$ is a $\mu$-saturated model of $T$ by (\ref{enu:realizing types}).
However, it is not $\mu^{+}$-saturated because the type $\set{\varphi\left(x,a_{j}\right)}{j\in I\mbox{ even}}\cup\set{\neg\varphi\left(x,a_{j}\right)}{j\in I\mbox{ odd}}$
is not realized in $M$: suppose $b$ realizes it. $b$ is a finite
tuple, but since $\bar{A}_{\alpha}$ is an increasing continuous sequence
for all $\alpha<\mu^{+}$, there must be some $\alpha<\mu^{+}$ and
$i\in\Succ\left(\kappa\right)$ such that $b\in A_{\alpha,i}$. But
clearly $\tp\left(b/I_{i}\right)$ is not a $D_{i}$-type --- contradiction. 
\end{proof}

\section{Dependent theories}

Here we characterize the NIP (dependent) theories which have exact
saturation at $\kappa$, assuming the Continuum Hypothesis at $\kappa$.
They happen to be precisely the non-distal theories. 

Throughout this section, assume that $T$ is NIP: for no formula $\varphi\left(x,y\right)$
is it the case that there are $\sequence{a_{i}}{i<\omega}$ and $\sequence{b_{s}}{s\subseteq\omega}$
such that $\C\models\varphi\left(a_{i},b_{s}\right)$ holds iff $i\in s$. 

We use the notation $X^{\opp}$ for a linear order $X$ to denote
$X$ with the order reversed.

\subsection{Preliminaries}

\subsubsection{NIP theories.}

Suppose $I$ is an indiscernible sequence. We will identify $I$ and
its underlying order. For instance, we will say that $I$ is \emph{dense}
if its underlying order type is. All our sequences will be infinite.

\subsubsection*{Shrinking of indiscernibles}

Recall the following definition, which, in NIP theories, gives a complete
type. 
\begin{defn}
\label{def:Average} The \emph{average type (at $\infty$)} of an
indiscernible sequence with no end, $\sequence{a_{i}}{i\in I}$, over
$A$, denoted by $\average IA{\infty}$, consists of formulas of the
form $\phi\left(b,x\right)$ with $b\in A$, such that for some $i\in I$,
$\C\models\phi\left(b,a_{j}\right)$ for every $j\ge i$. 
\end{defn}
We will use shrinking of indiscernibles (which is a stronger version
of the existence of averages), as formulated in the following fact.
Given a linear order $\left(X,<\right)$, a \emph{finite convex equivalence
relation }on $X$ is an equivalence relation with finitely many classes
which are convex. 
\begin{fact}
\label{fact:(Shrinking)}(Shrinking, see e.g., \cite[Theorem 3.33]{pierrebook})
Suppose $I$ is an indiscernible sequence over some set $A$, and
suppose that $b$ is some finite tuple and $\Delta$ a finite set
of formulas. Then there is a finite convex equivalence relation $\twiddle$
on $I$ such that each $\twiddle$-class $C$ is $\Delta$-indiscernible
over $Ab\cup I\backslash C$. 

Moreover, given a formula $\varphi\left(x_{0},\ldots,x_{n-1},y,z\right)$
there is such an equivalence relation $\twiddle$ such that for any
two finite increasing sequences $\bar{i},\bar{j}$ of length $n$
from $I$, if $\bar{i}\sim\bar{j}$ (i.e., $i_{0}\sim j_{0},\ldots,i_{n-1}\sim j_{n-1}$)
and $a\in A^{z}$ then $\varphi\left(a_{i_{0}},\ldots,a_{i_{n-1}},b,a\right)$
holds iff $\varphi\left(a_{j_{0}},\ldots,a_{j_{n-1}},b,a\right)$. 
\end{fact}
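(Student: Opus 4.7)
The plan is to reduce to the moreover clause for a single formula and then invoke the standard NIP alternation bound. I would first deduce the main clause from the moreover clause by taking a common refinement: for each $\varphi \in \Delta$ (and each permutation of its variables needed to capture $\Delta$-indiscernibility over $Ab \cup I \setminus C$), apply the moreover clause to obtain a finite convex equivalence relation on $I$; the coordinate-wise meet of finitely many finite convex equivalence relations on $I$ is again a finite convex equivalence relation, and by construction each of its classes is $\Delta$-indiscernible over the remaining parameters.

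For the moreover clause with a single $\varphi(x_0, \ldots, x_{n-1}, y, z)$, the essential input is the following consequence of NIP: for every formula $\psi(x, w)$ there is $N_\psi$ such that for any indiscernible sequence $\langle c_i : i \in J \rangle$ (matching the sort and length of $x$) and every parameter $d$, the truth value of $\psi(c_i, d)$ changes at most $N_\psi$ times as $i$ traverses $J$. This is proved by observing that an unbounded number of alternations along an indiscernible sequence would let one code the independence property for $\psi$. I would then apply this to the single-variable formulas $\psi_l(x; \bar{w}, a) = \varphi(w_0, \ldots, w_{l-1}, x, w_{l+1}, \ldots, w_{n-1}, b, a)$ obtained by freezing $n - 1$ of the $x$-coordinates.

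The crucial next step is to bound, uniformly in the auxiliary data $(l, \bar{w}, a)$, the \emph{positions} of the alternation cuts that $\psi_l(a_i; \bar{w}, a)$ creates in $I$. By indiscernibility of $I$ over $A$, these positions depend on $\bar{w}$ only through its order-type relative to a prospective cut, and on $a$ only through its $\varphi$-type over a bounded piece of $I$; combining this with the uniform alternation bound $N_{\psi_l}$ and a compactness/Ramsey argument collapses the a priori infinite family of cuts to finitely many. Let $\sim$ be the finite convex equivalence relation on $I$ whose classes are the intervals between consecutive such cuts. To conclude, if $\bar{i} \sim \bar{j}$ coordinate-wise, then swapping one coordinate at a time and applying the single-variable alternation bound for the appropriate $\psi_l$ shows that $\varphi(a_{\bar{i}}, b, a) \leftrightarrow \varphi(a_{\bar{j}}, b, a)$ for every $a \in A^z$.

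The main obstacle is the preceding step, namely showing that only finitely many cut-positions arise as $(l, \bar{w}, a)$ ranges. The cleanest route I see is by contradiction: infinitely many distinct cut-positions would let one extract, using indiscernibility of $I$, an alternation pattern along an auxiliary indiscernible sequence of parameters, witnessing the independence property for a suitable modification of $\varphi$ and contradicting the NIP hypothesis on $T$.
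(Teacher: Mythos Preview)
The paper does not prove this fact; it is quoted with a reference to Simon's book and used as a black box, so there is nothing in the paper to compare your attempt against.

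On the merits of your sketch: the overall plan---reduce the first clause to the ``moreover'' clause by refinement, then attack the latter via the single-variable alternation bound and a coordinate-by-coordinate swap---is the right shape, and you correctly identify where the work lies: bounding the cut positions uniformly as the frozen coordinates $\bar w$ range over $I$ and as $a$ ranges over $A$. However, your proposed resolution of that step is a genuine gap. The claim that the positions ``depend on $a$ only through its $\varphi$-type over a bounded piece of $I$'' is neither proved nor evidently true, and the contradiction you outline (``an alternation pattern along an auxiliary indiscernible sequence of parameters'') does not land as stated. The pattern one naturally extracts from infinitely many cut positions---for instance $\varphi(a_{i_l},b,a_k)\leftrightarrow l\le k$ after thinning---is order-like and entirely compatible with NIP for $\varphi$ or any fixed Boolean combination of it; no single indiscernible sequence of parameters will exhibit unbounded alternation here.

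What actually closes the gap is a different mechanism. If no uniform finite bound on the number of cut positions existed, compactness would produce an instance with $|T|^{+}$ pairwise disjoint convex pieces of an $A$-indiscernible sequence, each straddling an alternation cut for some $a_k\in A$; these pieces are mutually $A$-indiscernible yet none is indiscernible over $Ab$, so the dp-rank of $\tp(b/A)$ is at least $|T|^{+}$, which gives the independence property. (This is exactly the device the present paper invokes later, at the end of the proof of Lemma~\ref{lem:Making one step}.) So the step you flagged really does require an idea beyond the bare alternation bound, and your sketch does not supply it.
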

A cut in an indiscernible sequence $I$ has the form $\cut c=\left(I_{1},I_{2}\right)$
for $I_{1}$ an initial segment of $I$ and $I_{2}$ its corresponding
end segment. Our cuts will always be internal (both $I_{1},I_{2}$
are not empty) and have infinite cofinality from both sides, unless
we specifically say otherwise.

Here is a useful and easy corollary of Shrinking of indiscernibles
(Fact \ref{fact:(Shrinking)}). We leave its proof as an exercise. 
\begin{cor}
\label{cor:cor to shrinking}Suppose that $I$ is indiscernible, $\left|T\right|\leq\theta$,
and that $\cut c_{i}$ for $i<\theta^{+}$ are distinct cuts, each
of cofinality at least $\theta^{+}$ from both sides. Then for any
set $A$ with $\left|A\right|\leq\theta$, there is some $i<\theta^{+}$
such that there is an interval $I_{0}$ around $\cut c_{i}$ which
is indiscernible over $A\cup I\backslash I_{0}$. 
\end{cor}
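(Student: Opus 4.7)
The plan is to apply Fact \ref{fact:(Shrinking)} with the ``$A$'' in the fact taken to be $\emptyset$ (which is legitimate since $I$ is indiscernible over $\emptyset$), once for each finite tuple $\bar{a}$ from $A$ and each formula $\varphi$, taking ``$b$'' $=\bar{a}$ and $\Delta=\{\varphi\}$. This produces a finite convex equivalence relation $\sim_{\bar{a},\varphi}$ on $I$, each of whose classes is $\varphi$-indiscernible over $\bar{a}$ together with the rest of $I$. Let $\Xi$ be the collection of cuts of $I$ arising as a boundary of some $\sim_{\bar{a},\varphi}$. Since there are $\leq |A|^{<\omega}\cdot|T|\leq \theta$ pairs $(\bar{a},\varphi)$ and each contributes finitely many boundaries, $|\Xi|\leq\theta$. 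Because we are given $\theta^{+}$ distinct cuts $\cut c_{i}$, we can fix some $i<\theta^{+}$ with $\cut c_{i}\notin\Xi$; equivalently, $\cut c_{i}$ lies in the interior of a unique class $C_{\bar{a},\varphi}$ of $\sim_{\bar{a},\varphi}$ for every pair.

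Next, extract the interval $I_{0}$ around $\cut c_{i}$ by simultaneously shrinking inside every $C_{\bar{a},\varphi}$. Writing the interior of $C_{\bar{a},\varphi}$ as $(\alpha_{\bar{a},\varphi},\beta_{\bar{a},\varphi})$, we have $\alpha_{\bar{a},\varphi}<\cut c_{i}<\beta_{\bar{a},\varphi}$ for each pair. Since both cofinalities of $\cut c_{i}$ are at least $\theta^{+}$ and there are at most $\theta$ such intervals, the supremum $\alpha^{*}$ of the $\alpha_{\bar{a},\varphi}$ is still strictly below $\cut c_{i}$, and likewise $\beta^{*}>\cut c_{i}$. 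Put $I_{0}=(\alpha^{*},\beta^{*})$; by construction $I_{0}\subseteq C_{\bar{a},\varphi}$ for every pair.

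To check that $I_{0}$ is indiscernible over $A\cup(I\setminus I_{0})$, fix a formula $\psi$, a parameter tuple $\bar{c}$ from $A\cup(I\setminus I_{0})$, and two increasing tuples $\bar{i},\bar{i}'$ from $I_{0}$. Split $\bar{c}$ into its $A$-part $\bar{a}$ and its $I\setminus I_{0}$-part $\bar{d}$, and reorganise $\psi$ as a formula $\varphi(\bar{x},\bar{y})$ whose $\bar{x}$-arguments are filled from $I$ and $\bar{y}$-arguments from $A$. Because $\bar{d}$ lies entirely outside $I_{0}$, we can form two increasing tuples $\bar{k},\bar{k}'$ from $I$ by interleaving $\bar{d}$ with $\bar{i}$ and $\bar{i}'$ respectively. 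These coincide on $\bar{d}$-coordinates and, on the remaining coordinates, both lie in $I_{0}\subseteq C_{\bar{a},\varphi}$; hence $\bar{k}$ and $\bar{k}'$ are componentwise $\sim_{\bar{a},\varphi}$-equivalent. The moreover clause of Fact \ref{fact:(Shrinking)} applied to $\varphi$ with $b=\bar{a}$ then gives $\varphi(a_{\bar{k}},\bar{a})\Leftrightarrow\varphi(a_{\bar{k}'},\bar{a})$, which is the required indiscernibility.

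The only real subtlety is the interval extraction: one must have both cofinalities of $\cut c_{i}$ strictly above $\theta$ so that the supremum of $\theta$-many elements below $\cut c_{i}$ falls short of $\cut c_{i}$ (and symmetrically on the right). Everything else is bookkeeping: counting the applications of Shrinking to bound $|\Xi|$, and reading off indiscernibility from componentwise $\sim$-equivalence via the moreover clause.
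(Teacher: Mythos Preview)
The paper does not actually prove this corollary; it states ``We leave its proof as an exercise.'' So there is no proof to compare against. Your argument is correct and is precisely the intended exercise: apply Shrinking once for each pair $(\bar{a},\varphi)$ with $\bar{a}$ a finite tuple from $A$ and $\varphi$ a formula, collect the $\leq\theta$ boundary cuts, pick a $\cut c_{i}$ avoiding them, and use the cofinality hypothesis to intersect the $\leq\theta$ surrounding classes into a single interval $I_{0}$. The verification via the ``moreover'' clause, interleaving the $I\setminus I_{0}$-parameters into the increasing tuples, is exactly right.
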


\subsubsection*{Invariant types and Morley sequences}

Recall that for a global $A$-invariant type $p\in S\left(\C\right)$,
and for $B\supseteq A$, the sequence $\sequence{a_{i}}{i<\omega}$
generated by realizing $a_{i}\models p|_{Ba_{<i}}$ is always indiscernible
over $B$. This sequence is a\emph{ Morley sequence generated by $p$
over $B$}. In general, Morley sequences need not be of order type
$\omega$. A sequence $\sequence{a_{i}}{i\in I}$ of any order type
$\left(I,<\right)$ is a \emph{Morley sequence of $p$ over $B$}
if for any $i\in I$, $a_{i}\models p|_{Ba_{<i}}$. Let $p^{\left(I\right)}|_{B}=\tp\left(\sequence{a_{i}}{i\in I}/B\right)$
and $p^{\left(I\right)}$ be the global $A$-invariant type $\bigcup_{A\subseteq B}p^{\left(I\right)}|_{B}$.
It is an exercise to show that this is well defined. 

In NIP, any indiscernible sequence over $A$ is a Morley sequence
of some invariant type over a set containing $A$ (extend $I$ to
$I+I^{\opp}$, and let $p$ be the average type of $I^{\opp}$ at
$-\infty$, so $p$ is $I^{\opp}$-invariant and $I$ is a Morley
sequence of $p$ over $AI^{\opp}$).

\subsubsection{Distal theories.}

Suppose that $I$ is indiscernible and that $\cut c=\left(I_{1},I_{2}\right)$
is a cut in $I$. For a set $A$, denote by $\lim\left(\cut c^{-}/A\right)=\average{I_{1}}A{\infty}$,
and similarly $\lim\left(\cut c^{+}/A\right)$ is the average type
of $I_{2}$ at $-\infty$ over $A$. This is the \emph{limit type
of $\cut c^{-}$ (or $\cut c^{+}$) over $A$}. Note that if $A=\C$,
this is an $I_{1}$ (or $I_{2}$)-invariant type. 

Note that $\lim\left(\cut c^{+}/I\right)=\lim\left(\cut c^{-}/I\right)$
(so we just write $\lim\left(\cut c/I\right)$), and that if $b\models\lim\left(\cut c/I\right)$
then $b$ \emph{fills} $I$: when $b$ is put in $\cut c$, the augmented
sequence $I\cup b$ is indiscernible. In fact this is equivalent to
satisfying $\lim\left(\cut c/I\right)$. 

More generally, if $\bar{c}$ is some (possibly infinite) ordered
tuple of tuple in same length as the tuples in $I$, we will say that
$\bar{c}$ \emph{fills} $\cut c$ if when we put $\bar{c}$ in $\cut c$,
in the right order, the augmented sequence $I\cup\bar{c}$ is indiscernible. 

For instance, if $\bar{c}$ is of order type $\omega$, then, using
the notation from above, we have that if $\bar{c}\models\lim\left(\cut c^{+}/\C\right)^{\left(\omega\right)}|_{I}$,
then $\bar{c}$ fills $\cut c$. 

When $I$ is indiscernible over $A$, we can add ``over $A$'' everywhere,
meaning that we name the elements of $A$. 
\begin{defn}
\label{def:orthogonal}We say that two types $p(x),q(y)\in S\left(A\right)$
are \emph{orthogonal} if their union implies a complete type in $x,y$
over $A$ (usually this notion is called ``weakly orthogonal'',
but full orthogonality will not be used in this paper). 
\end{defn}
If $\cut c_{1}$ and $\cut c_{2}$ are two distinct cuts in a dense
indiscernible sequence $I$, and $b_{i}\models\lim\left(\cut c_{i}/I\right)$
for $i=1,2$, we will say that $b_{1},b_{2}$ are \emph{$I$-independent},
if, when placed in their appropriate cuts, $I\cup\left\{ b_{1},b_{2}\right\} $
is indiscernible. This is equivalent to saying that the two limit
types are orthogonal. 
\begin{defn}
A dense indiscernible sequence $I$ is called \emph{distal} if whenever
$\cut c_{1},\cut c_{2}$ are two distinct cuts in $I$, then $\lim\left(\cut c_{1}/I\right)$
and $\lim\left(\cut c_{2}/I\right)$ are orthogonal. \end{defn}
\begin{rem}
\label{rem:about distal indiscernible sequences}
\begin{enumerate}
\item If $I$ is a dense indiscernible sequence which is not distal, then
there are two distinct cuts $\cut c_{i}$ for $i=1,2$ and $b_{i}\models\lim\left(\cut c_{i}/I\right)$
such that $I\cup\left\{ b_{1},b_{2}\right\} $ (i.e., $I$ augmented
with $b_{1},b_{2}$ placed in their corresponding cuts) is not indiscernible.
By compactness and indiscernibility, it is easy to see that this is
true for any distinct cuts $\cut d_{1},\cut d_{2}$. 
\item Distal indiscernible sequences and distal theories (see below) were
defined and discussed at length in \cite{Distal}. There, they are
defined a bit differently, namely: an infinite indiscernible sequence
(not necessarily dense) is distal if it has the same EM-type as a
dense indiscernible sequence which is distal. On the face of it, this
defines a larger class even inside dense sequences, but this is not
the case by \cite[Lemma 2.3]{Distal}. 
\end{enumerate}
\end{rem}

\begin{defn}
An NIP theory $T$ is called \emph{distal} if all infinite indiscernible
sequences in it are distal. \end{defn}
\begin{fact}
\label{fact:distal - isolation}\cite[Theorem 9.22, Theorem 21]{pierrebook,PairsII}A
theory $T$ is distal iff for any formula $\varphi\left(x,y\right)$
there is a formula $\theta\left(x,z\right)$ such that: for any $M\models T$,
$A\subseteq M$ of size at least $2$, $a\in M^{x}$ and a finite
$C\subseteq A^{y}$ there is $b\in A^{z}$ such that $M\models\theta\left(a,b\right)$
and $\theta\left(x,b\right)\vdash\tp_{\varphi}\left(a/C\right)$.
Equivalently, in some  elementary extension $\left(M,A\right)\prec\left(M',A'\right)$
of the pair, there is $b\in\left(A'\right)^{z}$ such that $M'\models\theta\left(a,b\right)$
and $\theta\left(x,b\right)\vdash\tp_{\varphi}\left(a/A\right)$. \end{fact}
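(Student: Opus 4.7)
The plan is to prove both directions of the equivalence between distality and the uniform isolation property, and then separately verify that the two formulations (within $M$ itself versus in an elementary extension of the pair) are equivalent.

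For ``distal implies uniform isolation'', I would argue by contradiction. Fix $\varphi(x,y)$ and suppose that for every candidate $\theta(x,z)$ there is a witness $(M_\theta, A_\theta, a_\theta, C_\theta)$ where $C_\theta \subseteq A_\theta^y$ is finite and no $b \in A_\theta^z$ satisfies both $\theta(a_\theta, b)$ and $\theta(x,b) \vdash \tp_\varphi(a_\theta/C_\theta)$. A compactness argument in the pair language produces a single NIP pair $(M,A)$ with $a \in M$ violating uniform isolation for every $\theta$. From this ``non-uniform'' $a$ I would extract a dense indiscernible sequence $I \subseteq A$, two distinct cuts $\cut c_1, \cut c_2$, and elements $b_1, b_2$ realizing $\lim(\cut c_i/I)$ for which $I \cup \{b_1,b_2\}$ is not indiscernible, contradicting distality. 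Carrying out this extraction---turning the non-existence of a uniform $\theta$ into the presence of two cuts with non-orthogonal limit types---is the main obstacle.

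For the converse, suppose $T$ is not distal. Then there is a dense indiscernible sequence $I$ with distinct cuts $\cut c_1, \cut c_2$ and $b_i \models \lim(\cut c_i/I)$ with $I \cup \{b_1,b_2\}$ not indiscernible. Pick $\varphi(x,y)$ and a finite $C \subseteq (I \cup \{b_2\})^y$ witnessing that $\tp_\varphi(b_1/C)$ genuinely depends on $b_2$ and on tuples of $I$ lying on both sides of $\cut c_1$. Apply the hypothesis to $a := b_1$, $A := I \cup \{b_2\}$ and this $C$, obtaining $b \in A^z$ with $\theta(b_1,b)$ and $\theta(x,b) \vdash \tp_\varphi(b_1/C)$. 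Using Fact \ref{fact:(Shrinking)}, and taking the cuts $\cut c_1, \cut c_2$ to have cofinality larger than $|b|$, I can replace the $I$-components of $b$ by a tuple lying entirely on one pre-chosen side of $\cut c_1$ without altering the truth of $\theta(b_1,b)$. But then $\theta(b_1, b)$ is forced by the limit type $\lim(\cut c_1/\C)$ alone, so the $\varphi$-part of $\tp(b_1/C)$ depends only on that limit type, contradicting the choice of $b_1, b_2$ as witnesses to non-orthogonality.

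Finally, the equivalence of the two formulations is routine. The formulation over $M$ obviously implies the one in any elementary extension of the pair (take $(M',A') = (M,A)$), and conversely, a compactness argument in the pair language with parameters for $M \cup A$ promotes the approximations for all finite $C$ to a single $b \in (A')^z$ isolating $\tp_\varphi(a/A)$ in a sufficiently saturated pair extension. In the other direction, if such a $b$ exists in $(M',A') \succ (M,A)$, then for each finite $C \subseteq A^y$ the first-order statement asserting the existence of $z \in A$ with $\theta(a,z)$ and $\theta(x,z) \to \varphi^\pm(x,c)$ for all $c \in C$ (with the appropriate signs read off from $b$) holds in $(M',A')$, hence in $(M,A)$ by elementary equivalence.
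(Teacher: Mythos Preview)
The paper does not prove this statement at all: it is stated as a \emph{Fact}, attributed to \cite[Theorem 9.22]{pierrebook} and \cite[Theorem 21]{PairsII}, and used as a black box in the proof of the left-to-right direction of Theorem~\ref{thm:Distal-exact saturation}. There is therefore no proof in the paper to compare your proposal against.

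On the substance of your sketch, two remarks. First, you yourself identify the forward direction (distal $\Rightarrow$ uniform isolation) as the hard one and call the extraction of a non-distal sequence from a pair violating uniform isolation ``the main obstacle'', without saying how to carry it out. That is exactly where the work lies; the published arguments go through a genuine analysis (honest definitions, or the structure theory of NIP pairs), not a routine compactness step, so as written this direction is a statement of intent rather than a proof.

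Second, your converse sketch has a gap. You take $A = I \cup \{b_2\}$, obtain $b \in A^z$ with $\theta(b_1,b)\vdash\tp_\varphi(b_1/C)$, and then propose to ``replace the $I$-components of $b$ by a tuple lying entirely on one side of $\cut c_1$''. But $b$ may contain $b_2$ among its coordinates, and you do not say what to do then. You cannot simply replace $b_2$ by an element of $I$ near $\cut c_2$ while preserving $\theta(b_1,\cdot)$, since nothing guarantees that $\cut c_2$ is $b_1$-\generic{}; indeed, the non-orthogonality you are exploiting is precisely the phenomenon that the type of $b_1$ over $Ib_2$ is not controlled by $I$ alone. So the reduction to ``$b$ lies in $I^z$'' is not justified, and without it the contradiction does not go through.
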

\begin{example}
Examples of distal theories include o-minimal theories (e.g., RCF,
DLO), and the theory of the $p$-adics. 
\end{example}

\subsection{Results}

Now we are ready to state our main theorem for this section.
\begin{thm}
\label{thm:Distal-exact saturation}Suppose that $\kappa$ is a singular
cardinal such that $\kappa^{+}=2^{\kappa}$. An NIP theory $T$ with
$\left|T\right|<\kappa$ is distal iff it does not have exact saturation
at $\kappa$. 
\end{thm}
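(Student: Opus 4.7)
The plan is to prove the two directions of the equivalence separately.

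For `not distal $\Rightarrow$ exact saturation at $\kappa$', I would adapt the framework of Theorem \ref{thm:Main-Simple theories}, substituting NIP-specific tools for the simplicity machinery. Since $T$ is non-distal, fix a dense indiscernible sequence $I$ of length $\kappa$ with two distinct cuts $\cut c_{1}\neq\cut c_{2}$ whose limit types are non-orthogonal. Partition $I$ via a cofinal sequence $\sequence{\lambda_{i}}{i<\mu}$ with $\mu=\cof(\kappa)<\kappa$ and $\lambda_{i}$ regular. Define $D_{i}$-types via Shrinking (Fact \ref{fact:(Shrinking)}): $p\in S^{<\omega}(I_{i})$ is a $D_{i}$-type if for every finite $s\subseteq I_{i}$ there is $\alpha<\lambda_{i}$ such that $I_{i}\upharpoonright[\alpha,\lambda_{i})$ is indiscernible over $s\cup d$ for some (any) $d\models p$. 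Prove an NIP analogue of Main Lemma \ref{lem:Main Lemma simple}: any type over a $<\kappa$-sized subset of $\bigcup_{i}A_{i}$ extends to a type realized in some $\bar{A}\leq\bar{B}\in\SquareClass$. In place of the independence theorem, use Shrinking together with automorphisms of $\mathfrak{C}$ to relocate a given realization so that each $I_{i}$ retains an indiscernible final segment. Assemble a $\kappa$-saturated $D$-model by a $\kappa^{+}$-step induction using $2^{\kappa}=\kappa^{+}$; I expect the assumption $\square_{\mu}$ of the simple case to be avoidable because the NIP cut-structure is rigid enough to take limits without the combinatorial bookkeeping that forking forces. Finally, the type $\lim(\cut c_{1}/M)$ is omitted: by non-orthogonality with $\cut c_{2}$, filling $\cut c_{1}$ destroys indiscernibility around $\cut c_{2}$, so this type is not a $D$-type.

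For `distal $\Rightarrow$ no exact saturation at $\kappa$', the plan is to show directly that any $\kappa$-saturated model is already $\kappa^{+}$-saturated, using Fact \ref{fact:distal - isolation}. Let $M$ be $\kappa$-saturated, $A\subseteq M$ with $|A|\leq\kappa$, $p\in S(A)$; pick $a\models p$ in an elementary extension. For each $\varphi(x,y)$, Fact \ref{fact:distal - isolation} supplies $\theta_{\varphi}(x,z)$ such that for every finite $C\subseteq A$ there is $b_{\varphi,C}\in A^{z}$ satisfying $\models\theta_{\varphi}(a,b_{\varphi,C})$ and $\theta_{\varphi}(x,b_{\varphi,C})\vdash\tp_{\varphi}(a/C)$. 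The resulting partial type $q(x)=\set{\theta_{\varphi}(x,b_{\varphi,C})}{\varphi\in L,\ C\subseteq A\text{ finite}}$ over $A$ implies $p$, since any $\psi(x,c)\in p$ is captured by taking $\varphi=\psi$ and $C=\{c\}$. Decomposing $A=\bigcup_{i<\cof(\kappa)}A_{i}$ with $|A_{i}|<\kappa$, inductively construct partial realizations of $q|_{A_{i}}$ in $M$ via $\kappa$-saturation, and show (exploiting the formula-by-formula isolation) that they cohere into a single realization of $p$ in $M$.

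The main obstacle is the distal direction: $|q|=\kappa$ sits exactly at the $\kappa$-saturation threshold, so one cannot realize $q$ directly. The real work is to leverage the isolation together with the singularity of $\kappa$ --- either by extracting a $<\kappa$-size subtype already implying $p$, or by a careful cofinal induction where the $\theta_{\varphi}$-isolation ensures successive partial realizations agree on previously-determined $\varphi$-types while leaving enough freedom to extend. The hypothesis $|T|<\kappa$ should enter essentially here, bounding the number of isolation formulas that must be tracked simultaneously at each stage, and the hypothesis $2^{\kappa}=\kappa^{+}$ may be used via an enumeration of candidate extensions of the partial realizations.
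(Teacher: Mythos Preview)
Your non-distal direction has a genuine gap: the $D_{i}$-notion you borrow from the simple case (end-segment indiscernibility of $I_{i}$) does not interact with non-distality the way you claim. If $b\models\lim(\cut c_{1}/\OurSequence)$, then $\OurSequence\cup\{b\}$ is indiscernible, so for every $i$ an end segment of $I_{i}$ is indiscernible over $b$; hence $\tp(b/I_{i})\in D_{i}$ and your model would realize this type. Non-orthogonality with $\cut c_{2}$ only says that some $b_{2}$ filling $\cut c_{2}$ breaks indiscernibility of $\OurSequence\cup\{b,b_{2}\}$---but $b_{2}\notin\OurSequence$, so this says nothing about end segments of $I_{i}$ over $b$. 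The paper uses a different diagram: $D_{\OurSequence}$ consists of types \emph{orthogonal to $\lim(\cut c/\OurSequence)$ for some generic cut $\cut c$}, so non-distality immediately makes limit types non-$D$. The price is that the extension lemma now needs real work: Main Lemma \ref{lem:Main Lemma} (any generic cut witnesses membership in $D_{\OurSequence}$) is proved via the domination machinery of \cite{Distal}, and Lemma \ref{lem:Making one step} (extending a type to a $D$-type) is proved by a dp-rank argument, not by Shrinking-plus-automorphisms. Your proposed replacement for the independence theorem is also inadequate even for your $D_{i}$: Shrinking lets you find an indiscernible end segment over a \emph{given} element, but gives no way to amalgamate ``realizes $p$'' with ``good indiscernibility'' without an independence-theorem-type statement.

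For the distal direction you correctly locate the obstacle, but your fix is too vague and your external realization $a$ is pointing you away from the key step. The paper's trick is to first realize $p\upharpoonright A_{i}$ \emph{inside $M$} by some $b_{i}$, then take the isolation parameters $d_{i}^{\varphi}$ for $b_{i}$ in a pair extension $(M',A_{i}')\succ(M,A_{i})$. The crucial observation is that $b_{j}\models\theta^{\varphi}(x,d_{i}^{\varphi})$ for every $j\geq i$, simply because $b_{j}\equiv_{A_{i}}b_{i}$ and the parameter $d_{i}^{\varphi}$ lies in the predicate. Now pull each $d_{i}=\langle d_{i}^{\varphi}\rangle_{\varphi}$ back into $M$ by realizing its type over the set $A_{i}\cup\{b_{j}:j<\mu\}$ of size $<\kappa$; the resulting types $r_{i}$ are each of size $\leq|T|$, their union is consistent (witnessed by $b_{j}$ for large $j$) and has size $\mu\cdot|T|<\kappa$, so is realized in $M$. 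Neither $2^{\kappa}=\kappa^{+}$ nor any enumeration of candidate extensions is used in this direction.
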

In \cite[Question 2.30]{Sh900}, the following question appears. Is
there a dependent theory $T$ with exact saturation at some singular
cardinal $\kappa$ of cofinality $>\left|T\right|$ such that even
in $T^{\eq}$ there is no infinite indiscernible set?

In \cite[Section 9.3.4]{pierrebook}, there is an example of an NIP
theory which is not distal and yet has no non-trivial generically
stable type, even in $T^{\eq}$. Having an infinite indiscernible
set is equivalent to having a generically stable type (see \cite[Section 2.2.2, Remark 2.32]{pierrebook}).
Together we get the following.
\begin{cor}
\label{cor:Answer to 900}The answer to \cite[Question 2.30]{Sh900}
is ``yes'', provided that for some singular $\kappa$ with $\cof\left(\kappa\right)>\left|T\right|$,
$\kappa^{+}=2^{\kappa}$. 
\end{cor}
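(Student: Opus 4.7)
I propose proving the two directions separately.

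\textbf{Non-distal implies exact saturation.} I follow the blueprint of Theorem \ref{thm:Main-Simple theories}, substituting shrinking of indiscernibles (Fact \ref{fact:(Shrinking)} and Corollary \ref{cor:cor to shrinking}) for the simplicity-based tools (local character, independence theorem). By Remark \ref{rem:about distal indiscernible sequences}(1), fix a dense indiscernible sequence $I$ and two distinct cuts $\cut c_1, \cut c_2$ whose limit types over $I$ are non-orthogonal. Fix an increasing cofinal sequence $\sequence{\lambda_i}{i<\cof(\kappa)}$ in $\kappa$ with $\lambda_{i+1}$ regular and $\lambda_0 > \cof(\kappa)$, and take a piece $I_i$ of $I$ of size $\lambda_i$ for each $i \in \Succ(\cof(\kappa))$. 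Mirroring Definitions \ref{def:good} and \ref{def:The square class}, define $D_i$-sets as those over which $I_i^{\geq \alpha}$ remains indiscernible over $I_i^{<\alpha}$ for some $\alpha < \lambda_i$, and $\SquareClass$ as the corresponding class of increasing continuous $\cof(\kappa)$-indexed sequences.

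The heart of the proof is an analog of Main Lemma \ref{lem:Main Lemma simple}: any $p \in S(C)$ with $C \subseteq \bigcup A_i$ and $|C| < \kappa$ is realized by some extension $\bar A \leq \bar B \in \SquareClass$. Given $e \models p$, shrinking of indiscernibles yields a conjugate $e' \equiv_C e$ over which $I_i^{\geq \alpha}$ is indiscernible over $Ce' \cup I_i^{<\alpha}$ for some $\alpha < \lambda_i$, witnessing the $D_i$-property. The model is built by transfinite recursion of length $\kappa^+$, enumerating small types via $2^\kappa = \kappa^+$. The square principle is not needed: shrinking produces a finite convex partition, which is sufficiently uniform to accommodate limit stages of any small cofinality. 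The type unrealized in $M$ encodes simultaneously filling $\cut c_1$ and $\cut c_2$ in a way that witnesses their non-orthogonality, which is incompatible with the $D_i$-structure.

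\textbf{Distal implies no exact saturation.} Let $M$ be $\kappa$-saturated; I show that $M$ is $\kappa^+$-saturated. Given $p \in S(A)$ with $|A| \leq \kappa$, pick $a^* \models p$ in $\C$ and write $A = \bigcup_{i<\cof(\kappa)} A_i$ increasingly with $|A_i| < \kappa$. For each $\varphi(x,y)$ let $\theta_\varphi(x,z)$ be the distal isolation formula from Fact \ref{fact:distal - isolation}. A compactness argument in the pair $(\C, A)$, combining the basic and ``moreover'' forms of that fact, yields $b_{\varphi, i} \in A^z$ with $\theta_\varphi(a^*, b_{\varphi, i})$ and $\theta_\varphi(x, b_{\varphi, i}) \vdash \tp_\varphi(a^*/A_i)$. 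The partial type $\set{\theta_\varphi(x, b_{\varphi, i})}{\varphi \in L,\, i < \cof(\kappa)}$ over $M$ has cardinality $|T| \cdot \cof(\kappa) < \kappa$, hence is realized in $M$ by $\kappa$-saturation; any realization $d$ satisfies $\tp(d/A) = p$, since for each $\varphi(x, a) \in p$ with $a \in A_i$, $d \models \theta_\varphi(x, b_{\varphi, i})$ forces $d$'s $\varphi$-type over $A_i$ to agree with that of $a^*$.

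The main obstacle is the compactness step in the distal direction, namely producing $b_{\varphi, i}$ inside $A$ itself rather than in a pair-elementary extension. This requires either arranging enough pair-saturation of $(\C, A)$, or pushing the isolation down via the equivalence in Fact \ref{fact:distal - isolation}. The non-distal direction, while combinatorially involved, tracks the simple-case argument closely, with shrinking of indiscernibles playing the role of forking-independence.
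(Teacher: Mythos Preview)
You have misidentified what Corollary \ref{cor:Answer to 900} asserts and hence what needs to be proved. Question 2.30 of \cite{Sh900} asks whether there exists a dependent theory $T$ with exact saturation at some singular $\kappa$ of cofinality $>|T|$ such that $T^{\eq}$ has no infinite indiscernible set. The corollary is not a restatement of Theorem \ref{thm:Distal-exact saturation}; it is an immediate consequence of that theorem together with an externally supplied example. The paper's proof (given in the paragraph preceding the corollary) is two lines: take the NIP, non-distal theory from \cite[Section 9.3.4]{pierrebook} which has no non-trivial generically stable type in $T^{\eq}$, observe that having an infinite indiscernible set is equivalent to having a generically stable type, and apply the right-to-left direction of Theorem \ref{thm:Distal-exact saturation} to conclude that this theory has exact saturation at $\kappa$.

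Your proposal instead sketches a proof of Theorem \ref{thm:Distal-exact saturation} itself. Even setting aside that this is the wrong target, two remarks are in order. First, the distal direction is irrelevant to the corollary: only the existence of exact saturation for a specific non-distal theory is needed. Second, and more importantly, your proposal never mentions the key ingredient, namely the existence of an NIP, non-distal theory with no infinite indiscernible set in $T^{\eq}$. Without invoking such an example, there is simply no way to answer Question 2.30; the characterization of exact saturation via distality by itself says nothing about indiscernible sets. Your argument, as written, establishes at best that non-distal NIP theories have exact saturation, which is the content of the theorem, not of the corollary.
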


\subsubsection*{Left to Right}
\begin{prop}
If $T$ is distal and $\left|T\right|<\kappa$ is singular, then every
$\kappa$-saturated model $M$ is also $\kappa^{+}$-saturated.\end{prop}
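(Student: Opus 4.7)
The plan is to combine the distal isolation formula (Fact \ref{fact:distal - isolation}) with a decomposition of $A$ provided by the singularity of $\kappa$, and then to produce a realization in $M$ by building a coherent indiscernible sequence.

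Fix $p \in S(A)$ with $A \subseteq M$, $|A| = \kappa$; we must realize $p$ in $M$. Fix a realization $a^{\ast}\models p$ in a monster extension. Since $\lambda:=\cof(\kappa)<\kappa$, fix an increasing continuous filtration $A=\bigcup_{i<\lambda}A_{i}$ with $|A_{i}|<\kappa$.

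\textbf{Step 1 (distal isolation).} For each formula $\varphi(x,\bar{y})\in p$, let $i(\bar{y})=\min\{i:\bar{y}\in A_{i}\}$, and apply the finite version of Fact \ref{fact:distal - isolation} with parameter set $A_{i(\bar y)}$, realization $a^{\ast}$, and $C=\{\bar y\}$. This gives $c_{\varphi,\bar y}\in A_{i(\bar y)}^{z_\varphi}$ with $\models\theta_\varphi(a^{\ast},c_{\varphi,\bar y})$ and $\theta_\varphi(x,c_{\varphi,\bar y})\vdash\varphi(x,\bar y)$. Define
\[
q_{i}:=\{\theta_\varphi(x,c_{\varphi,\bar y}):\varphi(x,\bar y)\in p\restriction_{A_{i}}\}.
\]
Then $(q_{i})_{i<\lambda}$ is an increasing chain of consistent partial types (witnessed by $a^{\ast}$), each over $A_{i}$ of size $\leq|A_{i}|\cdot|T|<\kappa$, with $q:=\bigcup_{i<\lambda}q_{i}\vdash p$.

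\textbf{Step 2 (coherent indiscernible sequence in $M$).} By transfinite induction on $i<\lambda$, construct $(d_{i})_{i<\lambda}\subseteq M$ satisfying $d_{i}\models q_{i}$ and such that $(d_{j})_{j\leq i}$ is indiscernible over $A_{i}$. At stage $i$, the required type is over $A_{i}\cup\{d_{j}:j<i\}$, a set of size $<\kappa$; its consistency is witnessed in the monster by extracting an appropriate Morley sequence from a global invariant type extending $p$, and $\kappa$-saturation of $M$ yields $d_{i}\in M$. Since indiscernibility over each $A_{i}$ of a tail of the sequence is preserved by the requirement at later stages, the full sequence $(d_{i})_{i<\lambda}$ is indiscernible over $A=\bigcup_{i}A_{i}$.

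\textbf{Step 3 (conclusion).} Let $\theta_\varphi(x,c_{\varphi,\bar y})\in q$ be arbitrary, with $c_{\varphi,\bar y}\in A_{i(\bar y)}$. By construction $d_{j}\models\theta_\varphi(x,c_{\varphi,\bar y})$ for all $j\geq i(\bar y)$, and by indiscernibility of $(d_{j})_{j<\lambda}$ over $A\ni c_{\varphi,\bar y}$ the truth value of $\theta_\varphi(d_{j},c_{\varphi,\bar y})$ does not depend on $j$. Hence $d_{0}\models\theta_\varphi(x,c_{\varphi,\bar y})$ for every such formula; so $d_{0}\models q\vdash p$, and $p$ is realized in $M$.

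\textbf{Main obstacle.} The delicate step is Step 2: the inductive construction has to produce, at stage $i$, an element $d_{i}\in M$ whose full type over $A_{i}$ (not merely the partial information in $q_{i}$) is compatible with extending the already-fixed finite sequence $(d_{j})_{j<i}$ to an indiscernible sequence over the growing base $A_{i}$. The right way to ensure this is to fix at the outset a global $A$-invariant type $r$ extending $p$ and carry out the construction so that each $d_{i}$ plays the role of the $i$-th term of a Morley sequence of $r$; then indiscernibility over $A$ is automatic and the enlargement of the base from $A_{i-1}$ to $A_{i}$ does not disturb the previously chosen terms. The use of distality is essential for producing such an $r$ whose Morley sequence can be built in $M$ using only types of size $<\kappa$ at each stage: distal isolation reduces the ``indiscernibility demand'' at each stage to a type over a set of size $<\kappa$, which is exactly what $\kappa$-saturation can realize.
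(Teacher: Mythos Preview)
Your Step~2 has a genuine gap. The inductive hypothesis at stage $i$ is that $(d_j)_{j\le i}$ is indiscernible over $A_i$; at stage $i+1$ you need $(d_j)_{j\le i+1}$ indiscernible over the strictly larger set $A_{i+1}$, which in particular forces the already-fixed tuple $(d_j)_{j\le i}$ to be indiscernible over $A_{i+1}$. Nothing in the construction secures this, and it will typically fail: any formula with a parameter from $A_{i+1}\setminus A_i$ that separates two earlier $d_j$'s kills it, and no choice of $d_{i+1}$ can repair that. Your proposed remedy---running a Morley sequence of a global $A$-invariant extension $r$ of $p$---does not help either: such an $r$ need not exist when $A$ is not a model, and even taking $r$ invariant over some model $N\supseteq A$, the Morley condition $d_i\models r|_{Nd_{<i}}$ is a type over $\ge\kappa$ parameters, beyond the reach of $\kappa$-saturation. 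The assertion that distality ``reduces the indiscernibility demand to size $<\kappa$'' is precisely the missing lemma; your $q_i$ does not supply it, since it contains one isolating instance per formula of $p|_{A_i}$, so $|q_i|=|A_i|\cdot|T|$ and $|q|=\kappa$.

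The paper avoids the indiscernible-sequence construction by using the \emph{pair} form of Fact~\ref{fact:distal - isolation}. Fixing arbitrary $b_i\in M$ with $b_i\models p|_{A_i}$, one passes to $(M'_i,A'_i)\succ(M,A_i)$ and obtains for each formula $\varphi$ a single tuple $d_i^\varphi\in A'_i$ with $\theta^\varphi(x,d_i^\varphi)\vdash\tp_\varphi(b_i/A_i)$; thus $q_i=\{\theta^\varphi(x,d_i^\varphi):\varphi\}$ has size $|T|$ rather than $|A_i|$. A short elementarity argument in the pair language (using $b_i\equiv_{A_i}b_j$) gives $b_j\models q_i$ for all $j\ge i$. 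One then realizes $\tp(d_i/A_i\cup\{b_j:j<\mu\})$ inside $M$---a type over $<\kappa$ parameters---to pull the $d_i$'s back to $e_i\in M$, getting $r_i$ over $e_i$ with the same coherence. Now $r=\bigcup_i r_i$ lives over at most $\mu\cdot|T|<\kappa$ elements and is finitely satisfied by the $b_j$'s, hence realized in $M$, and any realization satisfies $p$. The decisive difference from your approach is that the pair version produces one isolating formula per formula of the language rather than per instance in $p$, collapsing the total parameter set to size $<\kappa$ and eliminating any need for indiscernibility bookkeeping.
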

\begin{proof}
Suppose $A\subseteq M$ and $\left|A\right|=\kappa$. Suppose $\mu=\cof\left(\kappa\right)<\kappa$.
Let $p\in S\left(A\right)$, and we want to show that $p$ is realized
in $M$. Write $A=\bigcup_{i<\mu}A_{i}$. For each $i<\mu$, let $b_{i}\models p\upharpoonright A_{i}$,
$b_{i}\in M$ (exists as $M$ is $\kappa$-saturated). Let $\varphi\left(x,y\right)$
be some formula. By Fact \ref{fact:distal - isolation}, there is
some $\left(M_{i}',A_{i}'\right)\succ\left(M,A_{i}\right)$, $d_{i}^{\varphi}\in A'_{i}$
and $\theta^{\varphi}$ such that $M'_{i}\models\theta^{\varphi}\left(b_{i},d_{i}^{\varphi}\right)$
and $\theta^{\varphi}\left(x,d_{i}^{\varphi}\right)\vdash\tp_{\varphi}\left(b_{i}/A_{i}\right)$
(as usual, we assume that everything happens in the monster model
$\C$ of $T$). Let $q_{i}=\set{\theta^{\varphi}\left(x,d_{i}^{\varphi}\right)}{\varphi\left(x,y\right)\in T}$. 
\begin{claim}
\label{claim:increasing types}If $\mu>j\geq i$ then $b_{j}\models q_{i}$
(in $M_{i}'$).\end{claim}
\begin{proof}
For $j=i$, this is by choice of $\theta^{\varphi}$, so suppose $j>i$
and that $\theta^{\varphi}\left(b_{i},d_{i}^{\varphi}\right)\land\neg\theta^{\varphi}\left(b_{j},d_{i}^{\varphi}\right)$
for some $\varphi$. Hence $\left(M_{i}',A_{i}'\right)\models\exists z\in P\left(\theta^{\varphi}\left(b_{i},z\right)\land\neg\theta^{\varphi}\left(b_{j},z\right)\right)$
where $P$ is a predicate symbol interpreted as $A_{i}'$. Hence the
same is true in $\left(M,A_{i}\right)$. But $b_{i}\equiv_{A_{i}}b_{j}$
so this cannot happen. 
\end{proof}
Let $d_{i}=\sequence{d_{i}^{\varphi}}{\varphi\in T}$ for $i<\mu$,
and find $e_{i}\models\tp\left(d_{i}/A_{i}\cup\set{b_{i}}{i<\mu}\right)$
in $M$, which exists by $\kappa$-saturation. Enumerate it as $e=\set{e_{i}^{\varphi}}{\varphi\in T}$.
Let $r_{i}\left(x\right)=\set{\theta^{\varphi}\left(x,e_{i}^{\varphi}\right)}{\varphi\in T}$.
Note that for each $\varphi$ and $i<\mu$, $\theta^{\varphi}\left(x,e_{i}^{\varphi}\right)\vdash\tp_{\varphi}\left(b_{i}/A_{i}\right)$
and that $b_{j}\models r_{i}$ for $j\geq i$. 

Let $r=\bigcup_{i<\mu}r_{i}$. By Claim \ref{claim:increasing types},
$r$ is a consistent type in $M$, and it is a type over a set of
size $\leq\mu\cdot\left|T\right|<\kappa$, so it is realized, say
by $c\in M$. Then $c\models p$. 
\end{proof}

\subsubsection*{Right to left --- Technical lemmas}
\begin{defn}
\label{def:generic cuts}Suppose $s\subseteq\C$ is a finite set,
and $I$ is an indiscernible sequence. Let us say that a cut $\cut c$
in $I$ is\emph{ \generic{} for $s$} if there is a neighborhood
$I_{0}=\left(i_{1},i_{2}\right)$ of $\cut c$ in $I$ such that $I_{0}$
is indiscernible over $s\cup I\backslash I_{0}$. 

Similarly, for a small set $A$, $\cut c$ is $A$-\generic{} if
it is \generic{} for every finite subset from $A$. We can similarly
say the $\cut c$ is $q$-\generic{} for a complete finitary type
$q$ over $I$, meaning that $\cut c$ is $c$-\generic{} for some
(any) $c\models q$ (formally, for $\bigcup c$). 
\end{defn}

\begin{defn}
\label{def:NIP finite diagram}For an indiscernible sequence $I$,
let $D_{I}$ be the collection of finitary types $q\in S^{<\omega}\left(I\right)$
such that $q$ is orthogonal (see Definition \ref{def:orthogonal})
to $\lim\left(\cut c/I\right)$ for some $q$-\generic{} cut $\cut c$. \end{defn}
\begin{rem}
Suppose that $\cut c$ is \generic{} for $c$ in some sequence $I$
as witnessed by $\left(i_{1},i_{2}\right)$. Then\emph{ $\lim\left(\cut{c^{-}}/Ic\right)=\lim\left(\cut c^{+}/Ic\right)$}
and if $b\models\lim\left(\cut c/Ic\right)$ then $b$ fills $\cut c$
and moreover, the interval $\left(i_{1},i_{2}\right)\cup b$ is indiscernible
over $c\cup I\backslash\left(i_{1},i_{2}\right)$. 

Hence, to say that $q$ is orthogonal to $\cut c$ in Definition \ref{def:NIP finite diagram}
means that whenever $c\models q$ and $a\models\lim\left(\cut c/I\right)$,
we have that $a\models\lim\left(\cut c/Ic\right)$. 
\end{rem}

\begin{rem}
\label{rem:reverse}Note that if $I$ is dense indiscernible, and
$\cut c$ is $c$-\generic{} then $\cut c$ is $c$-\generic{} also
in $I^{\opp}$ ($I$ with reverse order). Similarly, $q\in D_{I}$
iff $q\in D_{I^{\opp}}$ with the same cut witnessing this. Moreover,
note that $I$ is a $D_{I}$-set (for any finite $s\subseteq I$,
any cut $\cut c$ will witness this), and that if $A$ is a $D_{I}$-set
then so is $AI$. 
\end{rem}
We want to show that this definition behaves well.
\begin{mainlemma}
\label{lem:Main Lemma}Suppose $I$ is dense indiscernible. If $q\in D_{I}$
and $\cut c$ is $q$-\generic{} then $q$ is orthogonal to $\lim\left(\cut c/I\right)$. 
\end{mainlemma}
The proof uses two ingredients, both from \cite{Distal}. One is the
finite co-finite theorem, and the other is the external characterization
of domination. 

First, a technical claim.

\begin{claim}
\label{claim:preserving D_I}Suppose that $I$ is a dense indiscernible
sequence, and that $\tp\left(c/I\right)\in D_{I}$ as witnessed by
$\cut c=\left(I_{1},I_{2}\right)$. Let $I_{1}\ni i_{1}<i_{2}\in I_{2}$
be such that $\left(i_{1},i_{2}\right)$ is indiscernible over $I\backslash\left(i_{1},i_{2}\right)\cup c$.
Suppose that $J$ is some dense sequence with no minimum such that
$I'=I_{1}+J+I_{2}$ is indiscernible and, in $I'$ we still have that
$\left(i_{1},i_{2}\right)$ is indiscernible over $I\backslash\left(i_{1},i_{2}\right)\cup c$.
Then $\tp\left(c/I'\right)\in D_{I'}$ as witnessed by $\left(I_{1},J+I_{2}\right)$. 

Similarly, if $J$ has no maximum, the same is true for $\left(I_{1}+J,I_{2}\right)$. \end{claim}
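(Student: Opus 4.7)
The plan is to verify the two defining conditions for the witness: the cut $(I_{1},J+I_{2})$ must be $c$-generic in $I'$, and $\tp(c/I')$ must be orthogonal to $\lim((I_{1},J+I_{2})/I')$. Genericity is immediate: the interval $(i_{1},i_{2})$ in $I'$ contains $J$ and so forms a neighborhood of the cut, and since $I'\setminus(i_{1},i_{2})=I\setminus(i_{1},i_{2})$, the hypothesis that $(i_{1},i_{2})$ is indiscernible over $I\setminus(i_{1},i_{2})\cup c$ in $I'$ directly witnesses $c$-genericity.

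For orthogonality, take $a\models\lim((I_{1},J+I_{2})/I')$ and aim to show $a\models\lim((I_{1},J+I_{2})/I'c)$. Since $J$ lies strictly between $I_{1}$ and $I_{2}$, the restriction of the local limit type to $I$-parameters is $\lim((I_{1},I_{2})/I)$, so $a\models\lim((I_{1},I_{2})/I)$. The hypothesis $\tp(c/I)\in D_{I}$ witnessed by $(I_{1},I_{2})$ means by definition that $\tp(c/I)$ is orthogonal to $\lim((I_{1},I_{2})/I)$, hence $a\models\lim((I_{1},I_{2})/Ic)$. Writing $A:=I\setminus(i_{1},i_{2})=I'\setminus(i_{1},i_{2})$, this gives that $(i_{1},i_{2})_{I}\cup\{a\}$ (with $a$ at the cut of $I$) is indiscernible over $A\cup c$. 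On the other hand $I'\cup\{a\}$ (with $a$ at its cut) is indiscernible over $\emptyset$ by the choice of $a$, so the convex sub-sequence $(i_{1},i_{2})\cup\{a\}$ of $I'\cup\{a\}$ is indiscernible over $A$. What I need is to upgrade this $A$-indiscernibility to indiscernibility over $A\cup c$, since that is equivalent to $a\models\lim((I_{1},J+I_{2})/I'c)$.

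Consider any tuple in $(i_{1},i_{2})\cup\{a\}$ of a given order type. If all its components lie in one of the two $Ac$-indiscernible sub-sequences $(i_{1},i_{2})_{I}\cup\{a\}$ or $(i_{1},i_{2})_{I'}=(i_{1},i_{2})$ in $I'$, then the $Ac$-type of the tuple is determined by the order type alone. These two determinations must agree on the common dense sub-sequence $(i_{1},i_{2})_{I}$, which realises every finite strict order type, so they agree for every order type, giving a uniform determination for all non-mixing tuples.

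The main obstacle, which I expect to be the crux of the proof, is to treat the remaining tuples, namely those whose components mix $a$ with elements of $J$. My plan is to apply shrinking of indiscernibles (Fact~\ref{fact:(Shrinking)}) to the $A$-indiscernible sequence $(i_{1},i_{2})\cup\{a\}$ with tuple $c$, obtaining a finite convex partition into $Ac$-indiscernible classes, and to argue this partition must be trivial. The analysis above on non-mixing tuples rules out separators lying strictly inside either of the two $Ac$-indiscernible sub-sequences, so any putative separator of a minimal such partition must be adjacent to $a$. The assumption that $J$ has no minimum provides an $\omega$-sequence in $J$ descending to the cut from above; combining the NIP alternation bound for formulas $\varphi(a,j,c)$ along this sequence with the $Ac$-indiscernibility of $(i_{1},i_{2})_{I}\cup\{a\}$ on the left of the cut should force any separator adjacent to $a$ to collapse, giving triviality of the partition and the desired orthogonality. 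The symmetric case with $J$ having no maximum is handled by reversing all orders.
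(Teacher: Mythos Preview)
Your setup is correct and agrees with the paper: genericity of $(I_{1},J+I_{2})$ in $I'$ is immediate, and orthogonality amounts to showing that $L_{1}+a+J+L_{2}$ is indiscernible over $Ac$, where $L_{1}=(i_{1},\cut c)$, $L_{2}=(\cut c,i_{2})$ and $A=I\setminus(i_{1},i_{2})$. You also correctly note that both $L_{1}+a+L_{2}$ and $L_{1}+J+L_{2}$ are already $Ac$-indiscernible, which disposes of all non-mixing tuples.

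The gap is in your treatment of the mixing tuples. If you apply shrinking to $L_{1}+a+J+L_{2}$ with respect to $c$, then whenever the class of $a$ contains another element you can swap $a$ out and reduce to a non-mixing tuple. But nothing you have said rules out $a$ being a singleton class, and your proposed fix via an NIP alternation bound along a descending $\omega$-sequence in $J$ does not close this: that bound only tells you $\varphi(a,j,c)$ is eventually constant as $j\to a^{+}$, not that the eventual value equals the standard value coming from $L_{1}+J+L_{2}$ or $L_{1}+a+L_{2}$. There is no mechanism here connecting the eventual value on the $J$-side to anything you already control.

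The paper does not use shrinking for this step at all. Given a failure $\neg\varphi(a,b_{1},\ldots,b_{k-1})$ with $\bar b\subseteq J+L_{2}$ and $\varphi$ over $c\cup I_{1}\cup I\setminus(i_{1},i_{2})$, it picks $\bar b'\subseteq L_{2}$ and shows by compactness that the type ``$x$ fills $\cut c$ in $I$ over $c$'' together with $\neg\varphi(x,\bar b')$ is consistent. For a finite fragment one uses density of $J+L_{2}$ to match the positions of the finitely many $L_{2}$-parameters relative to $\bar b'$ against positions in $J+L_{2}$ relative to $\bar b$, and then the hypothesised $Ac$-indiscernibility of $(i_{1},i_{2})$ in $I'$ yields an automorphism fixing $c\cup I_{1}\cup I\setminus(i_{1},i_{2})$ carrying $\bar b'$ to $\bar b$; the inverse image of $a$ realises the fragment. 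This consistent type contradicts $\tp(c/I)\in D_{I}$. The idea you are missing is precisely this transport of the bad witness from $I'$ back into $I$ using the full $Ac$-indiscernibility of $(i_{1},i_{2})_{I'}$, rather than trying to analyse the internal structure of the augmented sequence via shrinking.
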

\begin{proof}
What this claim says is that, letting $\cut d=\left(I_{1},J+I_{2}\right)$,
$\cut d$ is $c$-\generic{} in $I'$ (by assumption) and if $a\models\lim\left(\cut d/I'\right)$
then $a\models\lim\left(\cut d/I'c\right)$. Suppose not. Then for
some formula $\varphi\left(x,x_{1},\ldots,x_{k-1}\right)$ over $c\cup I_{1}\cup I\backslash\left(i_{1},i_{2}\right)$
there are $\cut d<b_{1}<\ldots<b_{k-1}<i_{2}$ from $J+I_{2}$ such
that $\neg\varphi\left(a,b_{1},\ldots,b_{k-1}\right)$ holds even
though for all $c_{0}<\ldots<c_{k-1}<i_{2}$ from $I_{2}$, $\varphi\left(c_{0},\ldots,c_{k-1}\right)$
holds.

Choose $b_{1}'<\ldots<b_{k-1}'<i_{2}$ be from $I_{2}$. Let $\Gamma_{0}\left(x\right)$
be the type over $c\cup I$ saying that $x$ fills $\cut c$ and let
$\Gamma=\Gamma_{0}\cup\left\{ \neg\varphi\left(x,b_{1}',\ldots,b_{k-1}'\right)\right\} $.
Then $\Gamma$ is consistent: a finite part $\Gamma_{0}'$ of $\Gamma_{0}$
says that $\sequence{a'_{j}}{j<m}\concat\left\langle x\right\rangle \concat\sequence{c_{j}'}{j<l}$
is $\Delta$-indiscernible where $\sequence{a'_{j}}{j<m}$, $\sequence{c'_{j}}{j<l}$
are increasing sequences from $I_{1}$ and $I_{2}$ respectively and
$\Delta$ is a finite set of formulas. For $0<i<k$, let $s'_{i}=\set{j<l}{b_{i-1}'<c_{j}'\leq b'_{i}}$
where $b_{0}'=-\infty$ in $I_{2}$. As $J+I_{2}$ is dense, we can
find an increasing sequence $\sequence{c_{j}}{j<l}$ from $J+I_{2}$
such that, letting $s_{i}=\set{j<l}{b_{i-1}<c_{j}<b_{i}}$ (where
$b_{0}=-\infty$ in $J$), $s_{i}'=s_{i}$. As $\left(i_{1},i_{2}\right)$
is indiscernible over $c\cup I\backslash\left(i_{1},i_{2}\right)$
in $I'$, there is an automorphism $\sigma$ fixing $c\cup I_{1}\cup I\backslash\left(i_{1},i_{2}\right)$
taking $b_{i}'$ to $b_{i}$ and $c'_{j}$ to $c_{j}$. But $\sigma^{-1}\left(a\right)$
then satisfies $\Gamma_{0}'$ and also $\neg\varphi\left(x,b_{1}',\ldots,b_{k-1}'\right)$
as we wanted. 

However, $\Gamma$ cannot be satisfied by the assumption that $\tp\left(c/I\right)\in D_{I}$. 

The analogous claim on $\left(I_{1}+J,I_{2}\right)$ is proved similarly. 
\end{proof}
We continue with the finite-co-finite theorem \cite[Theorem 3.30]{Distal}.
This theorem states that if $I=I_{1}+I_{2}+I_{3}$ is indiscernible,
both $I_{1},I_{3}$ are infinite and $I_{1}+I_{3}$ is indiscernible
over $A$, then for any $a\in A$, and any $\varphi\left(x,y\right)$,
the set $\set{b\in I_{2}}{\C\models\varphi\left(a,b\right)}$ is either
finite or co-finite. 
\begin{fact}
\label{fact:corollary of finite-co-finite}\cite[Corollary 3.32]{Distal}
Let $I_{1}+I_{2}+I_{3}$ be an indiscernible sequence, such that $I_{1}$
and $I_{3}$ are without endpoints. Suppose that $I_{1}+I_{3}$ is
indiscernible over $A$. Then there is some $I_{2}'\subseteq I_{2}$
of size $\leq\left|T\right|+\left|A\right|$ such that $I_{1}+I_{2}'+I_{3}$
is $A$-indiscernible. \end{fact}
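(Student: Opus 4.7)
Argue by contradiction: assume $q\in D_{I}$ is witnessed by some $q$-generic cut $\cut d$, while there is another $q$-generic cut $\cut c$ with $q\not\perp\lim(\cut c/I)$. Fix $c\models q$. The failure of orthogonality produces a realization $a\models\lim(\cut c/I)$ and a formula $\varphi(x,\bar e,c)$, with $\bar e$ a finite tuple from $I$, such that $\neg\varphi(a,\bar e,c)$ holds, while $\varphi(b,\bar e,c)$ holds for every $b$ in a $c$-generic neighborhood $(i_{1},i_{2})$ of $\cut c$. (Shrink $(i_{1},i_{2})$ so that $\bar e\cap(i_{1},i_{2})=\emptyset$; the $c$-genericity then forces the truth value of $\varphi(\cdot,\bar e,c)$ to be constant on $(i_{1},i_{2})$, and the two choices for the extension of $\lim(\cut c/I)$ over $Ic$ can be arranged so that the ``limit-type'' choice $\lim(\cut c/Ic)$ contains $\varphi(x,\bar e,c)$ while $a$ refutes it.)

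Next I enlarge $I$ near $\cut d$ using Claim~\ref{claim:preserving D_I}. Insert at the witness cut $\cut d$ a dense sequence $J$ obtained as a long Morley sequence of $\lim(\cut d/\C)$ (with no endpoints in the middle of $\cut d$), producing an indiscernible extension $I'\supseteq I$. By Claim~\ref{claim:preserving D_I}, the cut $\cut d^{-}$ in $I'$ just before $J$ is a new witness of $q\in D_{I'}$, so $q\perp\lim(\cut d^{-}/I')$. After a further harmless shrinking, $\cut c$ is still $c$-generic in $I'$ with the same neighborhood $(i_{1},i_{2})$, and $\bar e$ lies in $I\setminus((i_{1},i_{2})\cup J)$.

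To transport the obstruction from $\cut c$ to $\cut d^{-}$, I apply Fact~\ref{fact:corollary of finite-co-finite} to the decomposition of $I'$ at $\cut d^{-}$ with parameter set $A=c\bar e$: since $J$ is a dense Morley filler of $\cut d^{-}$ and the two flanking pieces of $I'$ are already indiscernible over $c\bar e$ in a small neighborhood of $\cut d^{-}$ (by the $c$-genericity of $\cut d$ plus $\bar e\subseteq I\setminus(i_{1},i_{2})$), I obtain a thinning $J^{*}\subseteq J$ of size $\leq\left|T\right|$ such that the surrounding sequence is $c\bar e$-indiscernible on a neighborhood of $\cut d^{-}$. Together with the $c\bar e$-indiscernibility of $(i_{1},i_{2})$ coming from the $c$-genericity of $\cut c$, this yields a partial elementary map over $c\bar e$ sending the neighborhood of $\cut c$ to the neighborhood of $\cut d^{-}$ in an order-preserving way. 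Extending to $\sigma\in\Aut(\C)$ fixing $c\bar e$, the image $\sigma(a)$ realizes $\lim(\cut d^{-}/I')$ yet satisfies $\neg\varphi(\sigma(a),\bar e,c)$, while $\varphi(\cdot,\bar e,c)$ holds throughout the $c$-generic neighborhood of $\cut d^{-}$. This contradicts $q\perp\lim(\cut d^{-}/I')$.

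\textbf{Main obstacle.} The crux is the last step: the pure-language indiscernibility of $I'$ only supplies order automorphisms that need not fix $c$, so one cannot merely conjugate $\cut c$ to $\cut d^{-}$ directly. The role of the external characterization of domination from \cite{Distal} is exactly to upgrade the \emph{local} $c\bar e$-indiscernibility produced by the finite co-finite reduction into an honest partial elementary map over $c\bar e$ compatible with the two limit types. Keeping track, simultaneously at both cuts, of the $c\bar e$-indiscernibility needed to run the transport, without disturbing either $c$-generic structure, is the most delicate bookkeeping in the proof; the density of $I$ and the choice of $J$ as a Morley sequence of the invariant type $\lim(\cut d/\C)$ are what make $\cut d^{-}$ mirror $\cut c$ closely enough for the identification to succeed.
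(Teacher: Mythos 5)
Your proposal does not address the statement at all: what you have written is an outline of a proof of Main Lemma \ref{lem:Main Lemma} (independence of the witnessing generic cut for membership in $D_{I}$), not of Fact \ref{fact:corollary of finite-co-finite}. Indeed, your argument explicitly \emph{invokes} Fact \ref{fact:corollary of finite-co-finite} as a tool in the transport step, so as a proof of that Fact it is circular and contributes nothing towards it. In the paper this Fact carries no proof; it is quoted from \cite[Corollary 3.32]{Distal}, where it is deduced from the finite co-finite theorem (\cite[Theorem 3.30]{Distal}): for each formula with parameters in $A$ and each way of distributing its variables among $I_{1},I_{2},I_{3}$, the finite/co-finite dichotomy singles out finitely many exceptional elements of $I_{2}$, and iterating over the $\leq\left|T\right|+\left|A\right|$ many such formulas (with an induction on the number of variables landing in $I_{2}$) one discards at most $\left|T\right|+\left|A\right|$ elements in total. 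Note also that the statement as printed is a transcription slip: with $I_{2}'$ literally of size $\leq\left|T\right|+\left|A\right|$ the conclusion is trivial (take $I_{2}'=\emptyset$, since $I_{1}+I_{3}$ is already $A$-indiscernible); the intended and actually used form is $\left|I_{2}\backslash I_{2}'\right|\leq\left|T\right|+\left|A\right|$, i.e., one keeps a co-small subsequence, which is how the Fact is applied in the proof of Proposition \ref{prop:Orthogonal to any sequence}.

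Even judged as an attempt at the Main Lemma, the sketch has a genuine gap exactly where you flag the main obstacle: local $c\bar{e}$-indiscernibility of two disjoint neighborhoods of $\cut c$ and of $\cut d^{-}$ does not yield an automorphism of $\C$ over $c\bar{e}$ carrying one neighborhood onto the other, and the promised upgrade via domination is precisely the nontrivial content of Fact \ref{fact:extenal domination}, which your outline names but never actually verifies the hypotheses of. The paper's route is different and avoids any conjugation of cuts: it fixes a strongly dominating filler $\bar{a}$ of the witnessing cut (Fact \ref{fact:Existence of Domination}), uses Proposition \ref{prop:Orthogonal to any sequence} to make $\bar{a}$ indiscernible together with a neighborhood over $c$, and then applies Fact \ref{fact:extenal domination} directly with $d=c$ to conclude $a\models p|_{Ic}$.
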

\begin{prop}
\label{prop:Orthogonal to any sequence}Let $I$ be dense indiscernible.
Suppose that $q\in D_{I}$ and that $\cut c$ is a cut in $I$ that
witnesses it. Then $q$ is orthogonal to $\lim\left(\cut c^{+}/\C\right)^{\left(X\right)}|_{I}$
for any linear order $\left(X,<\right)$.\end{prop}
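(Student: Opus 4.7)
The plan is to reduce to a favorable case and then invoke Claim \ref{claim:preserving D_I} together with Main Lemma \ref{lem:Main Lemma}. Step one is a reduction to the case where $(X,<)$ is a dense linear order without endpoints: any linear order embeds order-preservingly into such a $Y$, and a Morley sequence of $\lim(\cut c^+/\C)$ of order type $Y$ over $I$ restricts to one of order type $X$. Since the joint type $q(x)\cup \lim(\cut c^+/\C)^{(X)}|_I$ is a sub-tuple restriction of $q(x)\cup \lim(\cut c^+/\C)^{(Y)}|_I$, completeness (hence orthogonality) for $Y$ entails it for $X$.

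Next, assume $X$ is dense without endpoints, fix $c\models q$, and let $\bar b=(b_x)_{x\in X}\models p^{(X)}|_I$ where $p:=\lim(\cut c^+/\C)$. Insert $\bar b$ into $\cut c$ to form the indiscernible extension $I':=I_1+\bar b+I_2$; the aim is to show that $I'$ is indiscernible over $c$, from which $\bar b\models p^{(X)}|_{Ic}$ will follow using $c$-genericity of $\cut c$ (which forces $\lim(\cut c/Ic)=p|_{Ic}$) together with the Morley sequence structure (applied at each position $x\in X$ to the sub-sequence $I'\setminus\{b_x\}$, whose cut at $b_x$ is again $c$-generic by density of $\bar b$). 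I would apply Claim \ref{claim:preserving D_I} with $J:=\bar b$, obtaining $\tp(c/I')\in D_{I'}$ witnessed by the new cut $\cut d:=(I_1,\bar b+I_2)$. Main Lemma \ref{lem:Main Lemma} applied to $\cut d$ in $I'$ then gives, for any $a\models\lim(\cut d/I')$, that $a\models\lim(\cut d/I'c)$; placing $a$ in $\cut d$ makes $I_1+\{a\}+\bar b+I_2$ indiscernible over $c$, and restricting to the subsequence $I'$ yields the required indiscernibility over $c$.

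The main obstacle is verifying the hypothesis of Claim \ref{claim:preserving D_I}: namely, that $(i_1,i_2)\cap I\cup\bar b$ is indiscernible over $(I\setminus(i_1,i_2))\cup c$ in $I'$. The $I$-part $(i_1,i_2)\cap I$ has this property by $c$-genericity of $\cut c$, and $\bar b$ is indiscernible over $I$ by the Morley property, but fusing these into indiscernibility of the union over a set containing the external parameter $c$ requires work. I would apply Fact \ref{fact:corollary of finite-co-finite} to the decomposition $(i_1,i_2)\cap I_1+\bar b+(i_1,i_2)\cap I_2$ (whose outer parts are endpointless by the cofinality convention on $\cut c$), producing a small $\bar b'\subseteq\bar b$ such that $(i_1,i_2)\cap I_1+\bar b'+(i_1,i_2)\cap I_2$ is already indiscernible over $(I\setminus(i_1,i_2))\cup c$. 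Extending from $\bar b'$ to all of $\bar b$ is the delicate point; one exploits the density of $\bar b$ and the $I$-indiscernibility of the full Morley sequence to transport the indiscernibility to arbitrary finite tuples of $\bar b$ via automorphisms fixing $I$ while carefully tracking how the parameter $c$ interacts through the $c$-generic neighborhood.
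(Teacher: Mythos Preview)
Your argument is circular. You invoke Main Lemma \ref{lem:Main Lemma} to finish, but in the paper's logical order Main Lemma \ref{lem:Main Lemma} is proved \emph{after} Proposition \ref{prop:Orthogonal to any sequence} and explicitly uses it: in the proof of the Main Lemma one needs that the dominating tuple $\bar a$ (which fills $\cut c$) keeps the interval $(i_1,i_2)\cup\bar a$ indiscernible over $c\cup I\setminus(i_1,i_2)$, and this is exactly what Proposition \ref{prop:Orthogonal to any sequence} supplies. So you cannot appeal to the Main Lemma here.

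Even setting that aside, the step you flag as ``the delicate point'' is not merely delicate---it is the entire content of the proposition. To apply Claim \ref{claim:preserving D_I} with $J=\bar b$ you must already know that $(i_1,i_2)\cup\bar b$ is indiscernible over $c\cup I\setminus(i_1,i_2)$, which is precisely what you are trying to prove. Your proposed fix via Fact \ref{fact:corollary of finite-co-finite} produces a small $\bar b'\subseteq\bar b$ with the required indiscernibility, but ``extending from $\bar b'$ to all of $\bar b$'' by automorphisms fixing $I$ cannot work: such automorphisms need not fix $c$, and the property you want is over $c$.

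The paper's route avoids both problems by reducing to \emph{finite} $X$ and running an induction on $n=|X|$. The key trick in the inductive step is to insert a large dense sequence $J$ between $a_{n-1}$ and $a_n$, then use Fact \ref{fact:corollary of finite-co-finite} to thin $J$ (not $\bar a$) so that $I_1+J+I_2$ is indiscernible over $c$; two applications of Claim \ref{claim:preserving D_I} then move the witnessing cut first to $(I_1+J,I_2)$ (absorbing $a_n$) and then to $(I_1,J+a_n+I_2)$, after which the induction hypothesis handles $a_0,\ldots,a_{n-1}$.
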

\begin{proof}
Suppose $\left(i_{1},i_{2}\right)$ is an interval around $\cut c$
which witnesses that $q\in D_{I}$. What we have to show is that if
$c\models q$ and $\bar{a}\models\lim\left(\cut c^{+}/\C\right)^{\left(X\right)}|_{I}$
(i.e., $\bar{a}$ is an ordered sequence of order type $\left(X,<\right)$
which fills $\cut c$), then $\left(i_{1},i_{2}\right)\cup\bar{a}$
is indiscernible over $c\cup I\backslash\left(i_{1},i_{2}\right)$. 

We may assume that $X$ is finite. We prove by induction on $n$ that
the proposition holds for all $I$ with $X=n$ as an order. For $n=1$
this is just the assumption that $q\in D_{I}$. Suppose this is true
for $n$ and prove it for $n+1$. 

Let $\cut c=\left(I_{1},I_{2}\right)$ inside $\left(i_{1},i_{2}\right)$.
Let $a_{0},\ldots,a_{n}$ fill $\cut c$. 

Let $J$ be a dense indiscernible sequence of cofinality $\left(\left|I\right|+\left|T\right|\right)^{+}$
from both sides (and such that between any two elements there are
$\left(\left|I\right|+\left|T\right|\right)^{+}$ elements), such
that $I_{1}+a_{0}+\ldots+a_{n-1}+J+a_{n}+I_{2}$ is indiscernible
over $I\backslash\left(i_{1},i_{2}\right)$. By Fact \ref{fact:corollary of finite-co-finite},
we may assume that $I_{1}+J+I_{2}$ is indiscernible over $c\cup I\backslash\left(i_{1},i_{2}\right)$.

Let $\cut d=\left(I_{1}+J,I_{2}\right)$, which we identify with the
corresponding cut in the extended sequence $I'=I\cup J$. By Claim
\ref{claim:preserving D_I}, $\cut d$ witnesses that $\tp\left(c/I'\right)\in D_{I'}$,
and hence $I_{1}+J+a_{n}+I_{2}$ is indiscernible over $c\cup I\backslash\left(i_{1},i_{2}\right)$.
Note that $J+a_{n}$ is dense with no minimum, so by applying Claim
\ref{claim:preserving D_I} again on the sequence $I''=I'\cup a_{n}$,
we get that $\tp\left(c/I''\right)\in D_{I''}$ as witnessed by the
cut corresponding to $\left(I_{1},J+a_{n}+I_{2}\right)$. By the induction
hypothesis, $I_{1}+a_{0}+\ldots+a_{n-1}+J+a_{n}+I_{2}$ is indiscernible
over $I\backslash\left(i_{1},i_{2}\right)$. In particular, we get
what we wanted.  
\end{proof}
Now we need to discuss domination in indiscernible sequences. Although
we will not use it directly, we give the definition.
\begin{defn}
\label{def:domination}Suppose that $I$ is a dense Morley sequence
of an $A$-invariant type $p$. Suppose $\cut c$ is a cut in $I$,
$\bar{a}$ is an ordered tuple which fills $\cut c$ over $A$, and
that $a\models p|_{IA}$. We will say that $\bar{a}$ dominates $a$
over $\left(I,A\right)$ if whenever $\cut d\neq\cut c$ is a cut
in $I$, and $\bar{b}$ is any ordered tuple which fills $\cut d$
over $A$, if $\bar{a}\ind_{I}\bar{b}$ (i.e., when put in their appropriate
cuts in the right order, $I\cup\left\{ \bar{a},\bar{b}\right\} $
is $A$-indiscernible) then $a\ind_{I}\bar{b}$ (i.e., $a\models p|_{IAb}$). 

Say that $\bar{a}$ strongly dominates $a$ over $\left(I,A\right)$
if for any dense $A$-indiscernible sequence $J$ containing $I$
such that $\bar{a}$ still fills some cut in $J$ and $a\models p|_{AJ}$,
$\bar{a}$ dominates $a$ over $\left(J,A\right)$. \end{defn}
\begin{rem}
In the definition in \cite[Definition 3.2]{Distal}, the tuple $\bar{b}$
is dense. However, by compactness, if $\bar{a}$ dominates $a$ over
$\left(I,A\right)$ for dense tuples, then there is domination for
any ordered tuple. 
\end{rem}
The following fact says that we can always find strong domination.
\begin{fact}
\label{fact:Existence of Domination}\cite[Proposition 3.6]{Distal}Let
$I$ be a dense Morley sequence of $p$ over $A$ and $a\models p|_{AI}$.
Suppose $\cut c$ is a cut in $I$. Then there is an ordered tuple
$\bar{a}$ that fills $\cut c$ which dominates $a$ over $\left(I,A\right)$. 
\end{fact}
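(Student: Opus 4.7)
The plan is to build $\bar{a}$ as a sufficiently long sequence that fills $\cut c$ inside a common $A$-indiscernible ambient extension of $I+\langle a\rangle$, and then use shrinking of indiscernibles (Fact~\ref{fact:(Shrinking)}) together with the NIP finite/cofinite behavior (Fact~\ref{fact:corollary of finite-co-finite}) to transport $A\bar{b}$-indiscernibility from $\bar{a}$ across to $a$.

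\emph{Construction of $\bar{a}$.} Fix a regular cardinal $\lambda>|T|+|A|+|I|$. Since $I$ is Morley of $p$ over $A$ and $a\models p|_{AI}$, the sequence $I+\langle a\rangle$ is $A$-indiscernible; call its EM-type $\Sigma$. By Ramsey and compactness, realize an $A$-indiscernible sequence of EM-type $\Sigma$ having $I+\langle a\rangle$ as a sub-sequence together with an additional chunk $\bar{a}=\sequence{a_\alpha}{\alpha<\lambda}$ inserted into $\cut c=(I_1,I_2)$. Then $I_1+\bar{a}+I_2+\langle a\rangle$ is $A$-indiscernible and $\bar{a}$ fills $\cut c$.

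\emph{Verifying domination.} Let $\cut d\neq\cut c$ be a cut in $I$ and let $\bar{b}$ fill $\cut d$ with $\bar{a}\ind_I\bar{b}$, i.e., $I\cup\bar{a}\cup\bar{b}$ placed in its appropriate cuts is $A$-indiscernible. We want $a\models p|_{AI\bar{b}}$. By compactness it suffices to fix one formula $\varphi(x,\bar{y},\bar{z})$ over $A$ and a finite tuple $\bar{i}\subseteq I$, and show that $\varphi(a,\bar{b},\bar{i})$ holds iff $p$ prescribes it. Apply shrinking of indiscernibles to the indiscernible sequence $\bar{a}$ over $A\bar{b}\bar{i}$: because $|A|+|\bar{b}|+|\bar{i}|+|T|<\lambda$, we extract a sub-interval $\bar{a}^*\subseteq\bar{a}$, cofinal on both sides of $\bar{a}$, which is $A\bar{b}\bar{i}$-indiscernible; in particular $\varphi(\cdot,\bar{b},\bar{i})$ takes a constant truth value $t$ on $\bar{a}^*$. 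Now $\bar{a}^*+\langle a\rangle$ is a sub-sequence of the ambient $A$-indiscernible configuration. Applying Fact~\ref{fact:corollary of finite-co-finite} to upgrade the $A\bar{b}\bar{i}$-indiscernibility of $\bar{a}^*$ to that of a bounded modification of $\bar{a}^*+\langle a\rangle$, and invoking NIP to rule out alternation at the endpoint $a$, one obtains $\varphi(a,\bar{b},\bar{i})\leftrightarrow t$. Finally, because $\bar{a}^*$ and $\langle a\rangle$ live inside a single $A$-indiscernible sequence whose $p$-theory is dictated by $I$, the value $t$ coincides with the value prescribed by $p|_{A\bar{b}\bar{i}}$.

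\emph{Main obstacle.} The delicate point is the transport step: propagating the $A\bar{b}\bar{i}$-indiscernibility of the interior piece $\bar{a}^*$ to the endpoint element $a$. One has to arrange matters so that Fact~\ref{fact:corollary of finite-co-finite} applies—choosing $\bar{a}$ with sufficient cofinal density inside $\cut c$ is helpful, as is arguing symmetrically around $\bar{a}^*$ using the ambient $A$-indiscernibility—and then invoke the NIP prohibition on arbitrary formula alternations along an indiscernible sequence. The symmetric case where $\cut d$ lies in $I_1$ is handled analogously, and since domination as defined need only be tested on single tuples $\bar{b}$, this suffices.
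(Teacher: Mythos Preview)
The paper does not prove this statement: it is recorded as a Fact citing \cite[Proposition 3.6]{Distal}, so there is no ``paper's own proof'' to compare against. Your proposal must therefore stand on its own.

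Your construction of $\bar a$ is fine, and indeed is essentially the construction used in \cite{Distal}: one inserts a long sequence into $\cut c$ so that $I_1+\bar a+I_2+\langle a\rangle$ is $A$-indiscernible. The problem is the verification of domination, specifically the ``transport step'' you yourself flag as the main obstacle. You have two pieces of information: $I_1+\bar a+I_2+\langle a\rangle$ is $A$-indiscernible, and (by hypothesis) $I\cup\bar a\cup\bar b$ is $A$-indiscernible. From these you must conclude something about $a$ and $\bar b$ \emph{together}, but neither assumption links them. Your proposed use of Fact~\ref{fact:corollary of finite-co-finite} does not fit its hypotheses: that fact requires a decomposition $J_1+J_2+J_3$ where $J_1+J_3$ is \emph{already} indiscernible over the extra parameters, and its conclusion concerns a small subset of the \emph{middle} piece $J_2$. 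Here $a$ is an endpoint, not an interior element, and you have no decomposition in which the outer parts are known to be $A\bar b$-indiscernible while $a$ sits in the middle. Likewise, ``invoking NIP to rule out alternation at the endpoint $a$'' is not an argument: NIP bounds alternations along an indiscernible sequence over fixed parameters, but you have not placed $a$ inside any sequence that is indiscernible over $\bar b$.

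Concretely, knowing that $\bar a$ (or a large $\bar a^*\subseteq\bar a$) is indiscernible over $AI\bar b$ and over $AIa$ tells you about $\bar a$, not about $\tp(a/AI\bar b)$. The actual argument in \cite{Distal} uses further machinery (a base-change lemma for limit types and an analysis of how $\tp(a/AI\bar a)$ controls $\tp(a/AI\bar b)$ via the indiscernibility of $\bar a$ over $AI\bar b$) that your sketch does not reproduce. As written, the proof has a genuine gap at exactly the point you identified.
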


\begin{fact}
\label{fact:extenal domination}\cite[Proposition 3.7]{Distal}Let
$I$ be a dense Morley sequence of an $A$-invariant type $p$ over
$A$, $a\models p|_{AI}$. Suppose $\bar{a}$ fills a cut $\cut c$
and that $\bar{a}$ strongly dominates $a$ over $\left(I,A\right)$.
Let $d\in\C$. Assume that:

There is a partition $I=J_{1}+J_{2}+J_{3}+J_{4}$ such that $J_{2}$
and $J_{4}$ are infinite, $\cut c$ is interior to $J_{2}$, $J_{2}\cup\left\{ \bar{a}\right\} $
is indiscernible over $J_{\neq2}Ad$ and $J_{4}$ is a Morley sequence
of $p$ over $J_{\neq4}Ad$. 

Then $a\models p|_{AId}$. \end{fact}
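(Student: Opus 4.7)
My plan is to use strong domination of $a$ by $\bar{a}$ to reduce the desired statement about $a$ to a more tractable statement about $\bar{a}$, and to exploit the two local ``indiscernibility over $Ad$'' assumptions (on $J_2\cup\{\bar{a}\}$ and on $J_4$) by gluing them into a single extended $A$-indiscernible configuration in which $\bar{a}$ is $I$-independent of a Morley tuple that encodes $d$.

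Concretely, I would first extend $J_4$ by a sufficiently long Morley sequence $\bar{b}$ of $p$ over $IA\bar{a}ad$, so that $J_4+\bar{b}$ remains a Morley sequence of $p$ over $J_{\neq 4}Ad$. Setting $I^{*}=J_{1}+J_{2}+J_{3}+J_{4}+\bar{b}$, it should be $A$-indiscernible, $\bar{a}$ should still fill $\cut c\subset J_{2}$ in $I^{*}$, and $a\models p|_{AI^{*}}$ by the extension property of Morley sequences of invariant types. To conclude $a\ind_{I^{*}}\bar{b}$ (equivalently $a\models p|_{AI\bar{b}}$) via strong domination, it then suffices to check that $\bar{a}$ and $\bar{b}$ are themselves $I^{*}$-independent, i.e., that $I^{*}\cup\{\bar{a},\bar{b}\}$ is $A$-indiscernible when each tuple is placed in its respective cut ($\cut c$ and the top cut of $I$). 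The translation from $a\models p|_{AI\bar{b}}$ back to $a\models p|_{AId}$ is then a matter of Morley-average bookkeeping: since $\bar{b}$ is a long Morley sequence of $p$ over $J_{\neq 4}Ad$, its average type over $AI\bar{b}$ detects $p|_{J_{\neq 4}Ad}$, and since $J_{\neq 4}\subseteq I$ we deduce $a\models p|_{AId}$.

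The main obstacle is the step $\bar{a}\ind_{I^{*}}\bar{b}$. We have $J_{2}\cup\{\bar{a}\}$ indiscernible over $J_{\neq 2}Ad$ from the hypothesis and $J_{4}+\bar{b}$ indiscernible over $J_{\neq 4}Ad$ by construction; these two ``local over $Ad$'' pieces are separated by $J_{3}$. To promote them to a single $A$-indiscernibility of the whole configuration, I would apply Fact \ref{fact:corollary of finite-co-finite}: at the cost of adding $\leq|T|$ elements to $J_{3}$, we may arrange that $J_{3}$ is itself indiscernible over $A\bar{a}\bar{b}d$, after which an iterative Ramsey plus automorphism argument in the style of Claim \ref{claim:preserving D_I} and Proposition \ref{prop:Orthogonal to any sequence} assembles the global indiscernibility of $I^{*}\cup\{\bar{a},\bar{b}\}$ over $A$. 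This is precisely where NIP is used: without it, the two local indiscernibilities could not be combined through the third ``bridging'' block $J_{3}$.
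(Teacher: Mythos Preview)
This statement is quoted as a Fact from \cite{Distal} and the present paper gives no proof of it, so there is no in-paper argument to compare against. What follows is therefore a direct assessment of your sketch.

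Your overall strategy --- produce a Morley tail $\bar b$ that remembers $d$, verify $\bar a\ind\bar b$, and then invoke strong domination --- is indeed the shape of the argument in \cite{Distal}. However, the way you have set things up is internally inconsistent, and one of the steps you label as routine is in fact the heart of the matter.

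\textbf{The role of $\bar b$ relative to $I^{*}$.} You set $I^{*}=I+\bar b$ and then speak of $\bar b$ as filling ``the top cut of $I$'' inside $I^{*}$. But in the domination framework (Definition~\ref{def:domination}) the tuple $\bar b$ must fill a cut of the ambient sequence; once $\bar b$ is absorbed into $I^{*}$ it no longer fills a cut of $I^{*}$. Correspondingly, your claim ``$a\models p|_{AI^{*}}$'' is literally $a\models p|_{AI\bar b}$, which on the next line you say you will \emph{deduce} from strong domination. As written, the hypothesis you are feeding into strong domination already is the conclusion you want from it.

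\textbf{The step $a\models p|_{AI^{*}}$ is not free.} You justify $a\models p|_{AI\bar b}$ by ``the extension property of Morley sequences''. What you actually have is $a\models p|_{AI}$ and $\bar b$ Morley over $AI\bar a a d$; this gives that $I+\langle a\rangle+\bar b$ is a Morley sequence of $p$ over $A$, not that $I+\bar b+\langle a\rangle$ is. Passing from one to the other would require the Morley sequence to be totally indiscernible, i.e.\ $p$ generically stable, which is exactly what one does \emph{not} have in the distal/non-distal dichotomy under discussion. So this step is a genuine gap, not bookkeeping.

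\textbf{The final transfer to $d$.} Even granting $a\models p|_{AI\bar b}$, your averaging sentence does not give $a\models p|_{AId}$: knowing that the $b_i$ eventually satisfy $\varphi(x,m,d)$ for $m\in AI$ tells you about $\tp(b_i/AId)$, whereas $a\models p|_{AI\bar b}$ controls $\tp(a/AI\bar b)$; the parameter $d$ does not appear in the latter. In the actual argument this transfer is exactly where the hypothesis on $J_4$ is used, and it requires arranging $\bar b$ to sit in an interior cut of a suitably enlarged sequence (so that strong domination applies with $\bar b$ as the \emph{filling} tuple, not as part of the ambient sequence), together with an indiscernibility-over-$d$ statement that lets one read off $\varphi(a,m,d)$ from the behaviour of nearby sequence elements.

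In short: the plan is right in spirit, but you need to keep $\bar b$ outside the ambient sequence used for domination, you must not assume $a\models p|_{AI\bar b}$ in advance, and the passage from $\bar b$ back to $d$ needs an explicit argument rather than an appeal to averages.
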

\begin{proof}
[Proof of Main Lemma \ref{lem:Main Lemma}]

So assume that $I$ is dense indiscernible and $\tp\left(c/I\right)\in D_{I}$.
Suppose that $\cut c$ is a cut which witnesses this. 

Let $\cut d$ be another cut which is \generic{} for $c$. We want
to show that $\tp\left(c/I\right)$ is orthogonal to $\lim\left(\cut d/I\right)$.
Suppose without loss of generality that $\cut c<\cut d$ (otherwise,
reverse the order of $I$, see Remark \ref{rem:reverse}). Suppose
that $a\models\lim\left(\cut d/I\right)$. We want to show that $a\models\lim\left(\cut d/Ic\right)$.
Let $i_{1}<i_{2}<j_{1}<j_{2}$ witness that $\cut c$ and $\cut d$
are $c$-\generic{} respectively. Let $\cut d=\left(I_{1},I_{2}\right)$.
Let $p=\lim\left(\cut d^{+}/\C\right)$, so that $p$ is $I_{2}$-invariant.
We want to show that $a\models p|_{Ic}$. 

By Fact \ref{fact:Existence of Domination}, we may find some $\bar{a}$
filling $\cut c$ over $I_{2}$ such that $\bar{a}$ strongly dominates
$a$ over $\left(I,I_{2}\right)$. As $\cut c$ witnesses that $\tp\left(c/I\right)\in D_{I}$,
and as $\bar{a}$ fills $\cut c$ in $I$, Proposition \ref{prop:Orthogonal to any sequence}
implies that $\left(i_{1},i_{2}\right)\cup\bar{a}$ is indiscernible
over $cI\backslash\left(i_{1},i_{2}\right)$. 

Let $J_{2}=\left(i_{1},i_{2}\right)$, $J_{4}=\left(j_{2},+\infty\right)$
in $I_{1}$ and let $J_{1},J_{3}$ fill the other parts of $I_{1}$
so that $I_{1}=J_{1}+J_{2}+J_{3}+J_{4}$. Let us check that the assumptions
of Fact \ref{fact:extenal domination} are satisfied, with $I$ there
being $I_{1}$, $d=c$ and $A=I_{2}$. Note that $I_{1}$ is a Morley
sequence of $p$ over $I_{2}$. Also $\cut c$ is interior to $J_{2}$,
and $J_{4}$ is a Morley sequence of $p$ over $J_{\neq4}I_{2}c$
by the choice of $\left(j_{1},j_{2}\right)$. We already mentioned
that $J_{2}\cup\left\{ \bar{a}\right\} $ is indiscernible over $J_{\neq2}I_{2}c$.
Finally, the conclusion of Fact \ref{fact:extenal domination} is
exactly what we want.
\end{proof}

\subsubsection*{Right to Left}

Assume that $T$ is NIP but not distal, and that $\left|T\right|<\kappa$
is singular such that $\kappa^{+}=2^{\kappa}$. 

For an ordinal $\alpha<\kappa$, let $\left(Y_{\alpha},<\right)$
be a dense linear order of cofinality $\left|\alpha\right|^{+}$ and
power $\left|\alpha\right|^{+}$. let $\left(X_{\alpha},<\right)$
be the linear order $Y_{\alpha}+Y_{\alpha}^{\opp}$. Let $\cut c_{\alpha}$
be the cut $\left(Y_{\alpha},Y_{\alpha}^{\opp}\right)$. 

Since $T$ is not distal, there is an indiscernible sequence $\OurSequence$
which is not distal. By Remark \ref{rem:about distal indiscernible sequences},
we may assume that $\OurSequence$ is dense, and of order type $\sum_{\alpha<\kappa}X_{\alpha}$
(see Notation \ref{notation: sum of orders}). Abusing notation, we
let $\cut c_{\alpha}$ denote the appropriate cuts in $\OurSequence$.
Note that $\left|\OurSequence\right|=\kappa$. 
\begin{defn}
Let $D=D_{\OurSequence}$. 
\end{defn}
Note that by Shrinking (Fact \ref{fact:(Shrinking)}), given some
set $A$ of size $<\kappa$, any cut $\cut c$ in $\OurSequence$
which is not one of the cuts\footnote{Here we use the term ``cuts'' in the most general sense, as opposed
to our convention so far where cuts had infinite cofinality from both
sides.} induced by the finite equivalence relations induced by $A$ (there
are at most $\left|T\right|+\left|A\right|$ such) which has cofinality
$\left(\left|T\right|+\left|A\right|\right)^{+}$ from both sides
is $A$-\generic{}. In particular, given such an $A$, for some $\alpha<\kappa$,
$\cut c_{\alpha}$ is \generic{} for $A$. 
\begin{lem}
\label{lem:Making one step}Suppose $A$ is a $D$-set, $\left|A\right|<\kappa$.
Suppose that $p\left(x\right)\in S\left(A\right)$. Then there is
some $q\supseteq p$, $q\in S\left(AI_{0}\right)$ where $I_{0}\subseteq\OurSequence$,
$\left|I_{0}\right|\leq\left|T\right|$ such that if $b\models q$
then $bA$ is a $D$-set. \end{lem}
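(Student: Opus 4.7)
The plan is to construct a carefully chosen $b^*\models p$ such that $Ab^*$ is a $D$-set, and then to use Shrinking of Indiscernibles to find a small $I_0\subseteq\OurSequence$ with $|I_0|\leq|T|$ so that $q:=\tp(b^*/AI_0)$ carries enough information about $\tp(b^*/A\OurSequence)$ to propagate the $D$-set property to every realization of $q$. To build $b^*$, pick $\alpha<\kappa$ such that $\cut c:=\cut c_\alpha$ is $A$-generic with cofinality $>|A|+|T|$ on both sides---possible by Corollary \ref{cor:cor to shrinking} since $|A|<\kappa$. Because $A$ is a $D$-set, every finite $a\in A$ satisfies $\tp(a/\OurSequence)\in D$, and Main Lemma \ref{lem:Main Lemma} then gives $\tp(a/\OurSequence)\perp\lim(\cut c/\OurSequence)$. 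By Proposition \ref{prop:Orthogonal to any sequence}, inserting a long Morley sequence $\bar e$ of $\lim(\cut c^+/\C)$ at $\cut c$ produces an $A$-indiscernible extension $\OurSequence^+$ of $\OurSequence$. Starting from any $b_0\models p$, a standard Ramsey-plus-compactness argument (modeled on the $(\star)$ step in the proof of Proposition \ref{prop:Model}) then yields $b^*\equiv_A b_0$ such that $\OurSequence^+$ is $Ab^*$-indiscernible; in particular $\cut c$ is $Ab^*$-generic.

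The main obstacle is upgrading this $Ab^*$-indiscernibility of $\OurSequence^+$ to full orthogonality: for every finite $a\in A$, $\tp(ab^*/\OurSequence)\perp\lim(\cut c/\OurSequence)$, which places $\tp(ab^*/\OurSequence)$ in $D$ with $\cut c$ as witness. The $Ab^*$-indiscernibility of $\OurSequence^+$ immediately tells us that each element of the inserted Morley sequence realizes $\lim(\cut c/\OurSequence Ab^*)$, but orthogonality requires this of \emph{every} $e\models\lim(\cut c/\OurSequence)$. I plan to handle this by choosing $\bar e$ of length $(|A|+|T|)^+$ and, given any such $e$, applying Shrinking of Indiscernibles to $\OurSequence^+$ over $Ab^*e$ to extract a long sub-Morley-sequence $\bar e'\subseteq\bar e$ that is $Ab^*e$-indiscernible; then comparing $e$ with elements of $\bar e'$ via the finite/co-finite theorem (Fact \ref{fact:corollary of finite-co-finite}), together with the domination facts (Facts \ref{fact:Existence of Domination} and \ref{fact:extenal domination}) in the style of Main Lemma \ref{lem:Main Lemma}, forces $e$ onto the canonical $\OurSequence$-invariant extension of $\lim(\cut c/\OurSequence)$ to $\OurSequence Ab^*$.

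Once orthogonality is established, I apply Shrinking of Indiscernibles (Fact \ref{fact:(Shrinking)}) to $\OurSequence$ over $Ab^*$ formula by formula, and take the common refinement to obtain a convex equivalence relation $\sim$ on $\OurSequence$ with at most $|T|$ many classes $\{C_j\}$, each $C_j$ indiscernible over $Ab^*\cup\OurSequence\setminus C_j$ in every formula. For each $j$ I pick a countable $C_j'\subseteq C_j$ which is dense, cofinal and coinitial in $C_j$, and set $I_0:=\bigcup_j C_j'$, so $|I_0|\leq|T|$. Put $q:=\tp(b^*/AI_0)$. To verify that $b\models q$ implies $bA$ is a $D$-set: since $b\equiv_{AI_0}b^*$ and each $C_j'\subseteq I_0$ is $Ab^*$-indiscernible, each $C_j'$ is also $Ab$-indiscernible; by density together with convexity of the shrinking classes $\sim^b$ for $b$ over $A$, the whole class $C_j$ lies in a single $\sim^b$-class, so $\sim^b$ is coarser than $\sim$. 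A formula-by-formula calculation---replacing any $\bar j\in\OurSequence$ by a $\bar j'\in I_0$ in the same $\sim$-classes (hence the same $\sim^b$-classes) and using $b\equiv_{AI_0}b^*$---then gives $\tp(b/A\OurSequence)=\tp(b^*/A\OurSequence)$. Consequently $\tp(ab/\OurSequence)=\tp(ab^*/\OurSequence)\in D$ for every finite $a\in A$, so $bA$ is a $D$-set, as required.
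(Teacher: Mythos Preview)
Your approach differs substantially from the paper's and has a gap at Step 3 that I do not see how to close. The Ramsey construction in Step 2 gives $b^*\models p$ for which $\cut c$ is $Ab^*$-\emph{generic}, but genericity is strictly weaker than orthogonality---and non-distality is precisely the assertion that this gap is nontrivial. The tools you cite (finite/co-finite, external domination) are used in Main Lemma \ref{lem:Main Lemma} to \emph{transfer} orthogonality from one cut where it is already assumed to another generic cut; they do not manufacture orthogonality from mere genericity. Concretely, the elements of your inserted Morley sequence $\bar e$ realize $\lim(\cut c/\OurSequence Ab^*)$ by construction, but an arbitrary external $e\models\lim(\cut c/\OurSequence)$ has no a priori relation to $\bar e$ over $Ab^*$, and shrinking $\bar e$ over $Ab^*e$ says nothing about which completion of $\lim(\cut c/\OurSequence)$ to $\OurSequence Ab^*$ this $e$ realizes. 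Step 4 has a separate problem: since $\OurSequence$ is not indiscernible over $A$, Fact \ref{fact:(Shrinking)} must be applied with finite tuples from $Ab^*$ as the added parameter, yielding $|A|+|T|$ rather than $|T|$ convex classes; and even granting that, constancy of $\varphi(\cdot,b,a)$ on the dense subset $C_j'$ does not preclude finitely many flips on $C_j\setminus C_j'$, so $\tp(b/A\OurSequence)$ need not equal $\tp(b^*/A\OurSequence)$.

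The paper's proof is entirely different and non-constructive: it attempts to build an increasing chain $\langle p_\varepsilon:\varepsilon<|T|^+\rangle$ of partial types extending $p$, at each stage adding a single formula over a fresh interval $I_\varepsilon$ around a new $A$-generic cut $\cut c_{\alpha_\varepsilon}$ that records the failure of $I_\varepsilon$ to be indiscernible over $Ab$ for a bad realization $b\models p_\varepsilon$ (here one uses that $A$ is a $D$-set, via Main Lemma \ref{lem:Main Lemma}, to replace an external filler by two elements of $I_\varepsilon$). If the construction ran for $|T|^+$ steps, one would obtain $|T|^+$ sequences mutually indiscernible over $A$ yet none indiscernible over $Ab$ for $b\models\bigcup_\varepsilon p_\varepsilon$, contradicting the bound on dp-rank in NIP theories. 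Hence the construction halts below $|T|^+$, and any completion of the resulting partial type is the required $q$. The dp-rank argument is the key idea missing from your plan.
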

\begin{proof}
Try to construct an increasing continuous sequence of partial types
$\sequence{p_{\varepsilon}}{\varepsilon<\left|T\right|^{+}}$ and
intervals $I_{\varepsilon}=\left(i_{1}^{\varepsilon},i_{2}^{\varepsilon}\right)\subseteq X_{\alpha_{\varepsilon}}$
around distinct $A$-\generic{} cuts $\cut c_{\alpha_{\varepsilon}}$
in $\OurSequence$ such that $p_{0}=p$, $p_{\varepsilon+1}\backslash p_{\varepsilon}$
contains one formula over $AI_{\varepsilon}$. Also, we ask that each
$I_{\varepsilon}$ is indiscernible over $A\OurSequence\backslash I_{\varepsilon}$
(in other words, they witness that $\cut c_{\alpha_{\varepsilon}}$
are \generic{} for $A$). 

Suppose $p_{\varepsilon}$ (or any completion of it) is not as we
wanted. This means that there is some $b\models p_{\varepsilon}$
such that $bA$ is not a $D$-set. Let $\alpha_{\varepsilon}<\kappa$
be such that $\cut c_{\alpha_{\varepsilon}}$ is a \generic{} cut
for $bA$ and that $\alpha_{\varepsilon}\notin\set{\alpha_{\zeta}}{\zeta<\varepsilon}$,
and suppose this is witnessed by $I_{\varepsilon}=\left(i_{1}^{\varepsilon},i_{2}^{\varepsilon}\right)\subseteq X_{\alpha_{\varepsilon}}$.
By definition, some finite tuple $ba$ from $bA$ is not a $D$-set,
which means that $\tp\left(ba/\OurSequence\right)$ is not orthogonal
to $\lim\left(\cut c_{\alpha_{\varepsilon}}/\OurSequence\right)$.
This means that there is some $c$ filling $\cut c_{\alpha_{\varepsilon}}$
such that $c$ does not fill $\cut c_{\alpha_{\varepsilon}}$ in $I_{\varepsilon}$
over $ba\cup\OurSequence\backslash I_{\varepsilon}$. 

Hence for some formula $\varphi\left(y_{0},\ldots,y_{i-1},y,y_{i+1},\ldots,y_{k},x,w\right)$
over $\OurSequence\backslash I_{\varepsilon}$ and some $c_{0}<\ldots<c_{i-1}<\cut c_{\alpha_{\varepsilon}}<c_{i+1}<\ldots<c_{k}$
from $I_{\varepsilon}$ such that $\neg\varphi\left(c_{0},\ldots,c_{i-1},c,c_{i+1},\ldots,c_{k},b,a\right)$
holds while for all $d_{0}<\ldots<d_{k}<\cut c_{\alpha}$ from $I_{\varepsilon}$,
$\varphi\left(d_{0},\ldots,d_{k},b,a\right)$ holds. 

As $A$ is a $D$-set, by Main Lemma \ref{lem:Main Lemma simple},
\emph{$c$ fills $\cut c_{\alpha_{\varepsilon}}$ in $I_{\varepsilon}$
over $A\cup\OurSequence\backslash I_{\varepsilon}$}. 

Let $c'<c''\in I_{\varepsilon}$ be such that $c_{i-1}<c'<c''<\cut c_{\alpha}$.
Then for any formula $\theta\in p_{\varepsilon}$,

\[
\C\models\exists x\theta\left(x\right)\land\neg\varphi\left(c_{0},\ldots,c_{i-1},c,c_{i+1},\ldots,c_{k},x,a\right)\land\varphi\left(c_{0},\ldots,c_{i-1},c',c_{i+1},\ldots,c_{k},x,a\right),
\]
 and hence
\[
\C\models\exists x\theta\left(x\right)\land\neg\varphi\left(c_{0},\ldots,c_{i-1},c'',c_{i+1},\ldots,c_{k},x,a\right)\land\varphi\left(c_{0},\ldots,c_{i-1},c',c_{i+1},\ldots,c_{k},x,a\right).
\]

Put 
\[
p_{\varepsilon+1}=p_{\varepsilon}\cup\left\{ \neg\varphi\left(c_{0},\ldots,c_{i-1},c'',c_{i+1},\ldots,c_{k},x,a\right)\land\varphi\left(c_{0},\ldots,c_{i-1},c',c_{i+1},\ldots,c_{k},x,a\right)\right\} .
\]
Then $p_{\varepsilon+1}$ is consistent. 

Finally, let $p_{\infty}=\bigcup_{\varepsilon<\left|T\right|^{+}}p_{\varepsilon}$,
and let $b\models p_{\infty}$. Then we have a sequence of mutually
indiscernible sequences $\sequence{I_{\varepsilon}}{\varepsilon<\left|T\right|^{+}}$
over $A$ and some $b$ such that for each $\varepsilon<\left|T\right|^{+}$,
$I_{\varepsilon}$ is not indiscernible over $b$. This means that
the dp-rank of $\tp\left(b/A\right)$ is $\geq\left|T\right|^{+}$,
which implies the independence property. See for example \cite[Corollary 2.3]{KaplanSimon}.

Hence we must get stuck somewhere, and we are done. \end{proof}
\begin{cor}
\label{cor:Main corollary}Suppose $A\subseteq B$ are $D$-sets and
$\left|A\right|<\kappa$, $\left|B\right|\leq\kappa$. Assume that
$p\left(x\right)\in S\left(A\right)$, then there is some $b\models p$
such that $Bb$ is a $D$-set.\end{cor}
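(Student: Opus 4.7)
The plan is to iterate Lemma \ref{lem:Making one step} along a cofinal sequence in $B$, using the singularity of $\kappa$ to keep every intermediate type defined over a $D$-set of cardinality strictly less than $\kappa$. Concretely, set $\lambda=\cof(\kappa)$, which is less than $\kappa$ since $\kappa$ is singular, and write $B=\bigcup_{i<\lambda}B_{i}$ as an increasing continuous union with $A\subseteq B_{0}$ and $|B_{i}|<\kappa$ for every $i<\lambda$. Each $B_{i}$ is a $D$-set as a subset of the $D$-set $B$.

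I would then build, by induction on $i<\lambda$, an increasing continuous sequence of complete types $p_{i}\in S(C_{i})$ and an increasing continuous sequence of $D$-sets $C_{i}=B_{i}\cup J_{i}$ (where $J_{i}\subseteq\OurSequence$ and $|J_{i}|\le|T|\cdot|i|<\kappa$), such that $p_{0}\supseteq p$ and every realization $b\models p_{i}$ makes $C_{i}\cup\{b\}$ a $D$-set. At a successor stage, given $p_{i}$ over $C_{i}$, first extend to any complete $\tilde{p}\in S(C_{i}\cup B_{i+1})$ (which is again over a $D$-set of size $<\kappa$ containing $\OurSequence$ parts), then apply Main Lemma \ref{lem:Making one step} to $\tilde{p}$ to obtain $p_{i+1}$ over $C_{i}\cup B_{i+1}\cup J_{i+1}^{\mathrm{new}}$, with $J_{i+1}^{\mathrm{new}}\subseteq\OurSequence$ of size $\le|T|$, such that any realization of $p_{i+1}$ makes $(C_{i}\cup B_{i+1})\cup\{b\}$ a $D$-set; by Remark \ref{rem:reverse} one may then absorb $J_{i+1}^{\mathrm{new}}$ into $J_{i+1}$ without losing the $D$-set property. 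At a limit $\delta<\lambda$, simply take unions: $C_{\delta}=\bigcup_{i<\delta}C_{i}$ and $p_{\delta}=\bigcup_{i<\delta}p_{i}$.

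The only thing to verify at limits is that the desired properties are preserved, which is routine: the union of an increasing chain of complete types over the $C_{i}$'s is a complete type over $C_{\delta}$ (every formula mentions only finitely many parameters, hence lives in some $C_{i}$); the union of an increasing chain of $D$-sets is a $D$-set for the same reason; and if each $C_{i}\cup\{b\}$ is a $D$-set then so is $C_{\delta}\cup\{b\}$, since any finite tuple from it sits in some $C_{i}\cup\{b\}$. Size control also holds at limits because $|C_{\delta}|\le|\delta|\cdot\sup_{i<\delta}|C_{i}|<\kappa$, using $\lambda<\kappa$ and $|T|<\kappa$.

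Finally, realize $p_{\lambda}:=\bigcup_{i<\lambda}p_{i}$ by some $b$ in the monster. Then $b\models p$ because $p\subseteq p_{0}\subseteq p_{\lambda}$, and $B\cup\{b\}\subseteq C_{\lambda}\cup\{b\}$ is a $D$-set by the same limit argument applied at stage $\lambda$. The only potential obstacle is that Lemma \ref{lem:Making one step} requires its input domain to be of size $<\kappa$, which is exactly where the singularity of $\kappa$ and the hypothesis $|T|<\kappa$ are used; once the bookkeeping on cardinalities is set up as above, the construction goes through with no further difficulty.
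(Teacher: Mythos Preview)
Your proof is correct and follows essentially the same approach as the paper: iterate Lemma~\ref{lem:Making one step} along an increasing continuous filtration of $B$ by small $D$-sets, accumulating the auxiliary parameters from $\OurSequence$. The only cosmetic difference is that the paper indexes the filtration by $\kappa$ itself (taking $B_{\alpha}$ to be an initial segment of an enumeration of $B$ together with $A$), whereas you index by $\cof(\kappa)$; both choices keep every intermediate domain of size $<\kappa$, so this is immaterial.
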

\begin{proof}
Write $B$ as an increasing continuous sequence $\bigcup_{\alpha<\kappa}B_{\alpha}$
where $\left|B_{\alpha}\right|<\kappa$ and $A=B_{0}$. Construct
an increasing continuous sequence of types $\sequence{q_{\alpha}}{\alpha<\kappa}$
and subsets $\sequence{I_{\alpha}}{\alpha<\kappa}$ of $\OurSequence$
such that $p=q_{0}$, $q_{\alpha+1}\in S\left(B_{\alpha+1}\cup I_{\alpha+1}\right)$,
$\left|I_{\alpha}\right|<\kappa$ for all $\alpha<\kappa$ and if
$b\models q_{\alpha+1}$ then $bB_{\alpha+1}$ is a $D$-set. 

For $\alpha=0$ and limit there is nothing to do. For $\alpha+1$,
first choose $q_{\alpha+1}'\in S\left(B_{\alpha+1}\cup I_{\alpha}\right)$
extending $q_{\alpha}$. Apply Lemma \ref{lem:Making one step} to
get some $J$ of size $\leq\left|T\right|$ and a type $q_{\alpha+1}\in S\left(B_{\alpha+1}\cup I_{\alpha}J\right)$
such that if $b\models q_{\alpha+1}$ then $bB_{\alpha+1}$ is a $D$-set
(which is the same as saying that $bB_{\alpha+1}\OurSequence$ is
a $D$-set, see Remark \ref{rem:reverse}). Finally, let $I_{\alpha+1}=I_{\alpha}J$. 

When the construction is done, let $q=\bigcup_{\alpha<\kappa}q_{\alpha}$
and $b\models q$. 
\end{proof}
We can finally proof the right to left direction of Theorem \ref{thm:Distal-exact saturation}.
We wish to construct a $\kappa$-saturated model $M$ which is not
$\kappa^{+}$-saturated. 
\begin{prop}
\label{prop:not kappa^+ saturated}Any $D$-model $M\supseteq\OurSequence$
is not $\kappa^{+}$-saturated.\end{prop}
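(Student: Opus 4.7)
The plan is to exhibit a type over $\OurSequence$ of cardinality $\leq\kappa$ that cannot be realized in any $D$-model containing $\OurSequence$. The natural candidate, given the apparatus built so far, is the one-variable limit type $p(x) := \lim(\cut c_{0}/\OurSequence)$ of the distinguished cut $\cut c_{0}$. By NIP this is a complete type over $\OurSequence$, and its realizations are precisely the elements that fill $\cut c_{0}$.

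Suppose toward contradiction that some $b\in M$ realizes $p$. Since $M$ is a $D$-model, $q := \tp(b/\OurSequence)$ lies in $D$. The key observation is that every other cut $\cut c_{\beta}$, $\beta\neq 0$, is $b$-\generic: as $b$ fills $\cut c_{0}$, the augmented sequence $\OurSequence\cup\{b\}$ (with $b$ placed at $\cut c_{0}$) is indiscernible, so any open neighborhood $I_{0}$ of $\cut c_{\beta}$ chosen inside $X_{\beta}$---hence disjoint from the location of $b$---is automatically indiscernible over $b\cup(\OurSequence\setminus I_{0})$. This is the only routine verification required.

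Applying Main Lemma \ref{lem:Main Lemma} to $q\in D$ with the $b$-\generic{} cut $\cut c_{\beta}$ then yields that $q = \lim(\cut c_{0}/\OurSequence)$ is orthogonal to $\lim(\cut c_{\beta}/\OurSequence)$ for every $\beta\neq 0$. On the other hand, $\OurSequence$ was chosen non-distal, so by Remark \ref{rem:about distal indiscernible sequences}(1) (combined with the fact that an indiscernible augmentation is always available: just take a realization of the limit type of $\cut c_{\beta}$ over $\OurSequence$ extended by a realization of $\lim(\cut c_{0}/\OurSequence)$), the limit types $\lim(\cut c_{0}/\OurSequence)$ and $\lim(\cut c_{\beta}/\OurSequence)$ are \emph{not} orthogonal. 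This directly contradicts the previous sentence.

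Consequently no $b\in M$ realizes $p$; and since $p$ is a type over $\OurSequence$, which has cardinality $\kappa<\kappa^{+}$, $M$ fails to be $\kappa^{+}$-saturated. The entire argument reduces to a direct application of Main Lemma \ref{lem:Main Lemma} together with the non-orthogonality content of non-distality; the only point that requires any care, already handled above, is the genericity of $\cut c_{\beta}$ with respect to $b$, which follows straight from the definition of ``fills''.
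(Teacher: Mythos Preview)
Your proof is correct and rests on the same idea as the paper's: the limit type $\lim(\cut c_{0}/\OurSequence)$ cannot be a $D$-type because non-distality of $\OurSequence$ makes it non-orthogonal to every other cut's limit type. The only difference is packaging. The paper argues directly from the definition of $D_{\OurSequence}$: since $\lim(\cut c/\OurSequence)$ is not orthogonal to $\lim(\cut d/\OurSequence)$ for any $\cut d\neq\cut c$ (Remark~\ref{rem:about distal indiscernible sequences}), no cut can witness membership in $D$, so the type is not realized in $M$. You instead assume $q=\tp(b/\OurSequence)\in D$ and invoke Main Lemma~\ref{lem:Main Lemma} to force orthogonality to $\lim(\cut c_{\beta}/\OurSequence)$ for some specific $\beta\neq 0$, then derive the contradiction. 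Your route is slightly longer but has the small virtue of not needing to discuss whether $\cut c_{0}$ itself could be the witnessing generic cut; the paper's one-line version silently ignores that edge case.
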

\begin{proof}
As $\OurSequence$ is not distal, the limit type of any cut $\cut c$
in it is not orthogonal to any other limit cut. See Remark \ref{rem:about distal indiscernible sequences}.
This means that the type $\lim\left(\cut c/\OurSequence\right)$ is
not a $D$-type, and hence not realized in $M$. 
\end{proof}

\begin{proof}
[Proof of Theorem \ref{thm:Distal-exact saturation}]By Proposition
\ref{prop:not kappa^+ saturated}, it is enough to construct a $\kappa$-saturated
$D$-model containing $\OurSequence$. We do this in similar way to
the one in the proof of Theorem \ref{thm:Main-Simple theories}. 

Let $\sequence{S_{\alpha}}{\alpha<\kappa^{+}}$ be a partition of
$\kappa^{+}$ to sets of size $\kappa^{+}$. Construct an increasing
continuous sequence of $D$-sets $\sequence{A_{\alpha}}{\alpha<\kappa^{+}}$
and sequences of types $\sequence{\bar{p}_{\alpha}}{\alpha<\kappa^{+}}$
such that:
\begin{enumerate}
\item $\left|A_{\alpha}\right|\leq\kappa$;
\item $\bar{p}_{\alpha}$ is an enumeration $\sequence{p_{\alpha,\beta}}{\beta\in S_{\alpha}\backslash\alpha}$
of all complete types over subsets of $A_{\alpha}$ of size $<\kappa$
(this uses $\kappa^{+}=2^{\kappa}$); 
\item If $\alpha\in S_{\gamma}$ and $\gamma\leq\alpha$, then $A_{\alpha+1}$
contains a realization of $p_{\gamma,\alpha}$.
\end{enumerate}
Start with $A_{0}=\OurSequence$, see Remark \ref{rem:reverse}. Step
(3) is done by Corollary \ref{cor:Main corollary}. Finally, let $M=\bigcup_{\alpha<\kappa^{+}}A_{\alpha}$
and we are done. 
\end{proof}
\bibliographystyle{alpha}
\bibliography{common2}

\end{document}